\documentclass{article}

\usepackage[utf8]{inputenc}
\usepackage{amsmath}
\usepackage{ifsym}
\usepackage{amsfonts}

\usepackage{amsthm}
\newtheorem{thm}{Theorem}
\newtheorem{defi}{Definition}

\newtheorem{lemat}{Lemma}

\title{On the isomorphisms of Fourier algebras of finite abelian groups}
\author{A. Czuron
\&
M. Wojciechowski\thanks{The research of this author has been supported by NCN grant no. N N201 607840}\\
\\
Institute of Mathematics, Polish Academy of Sciences\\
00-956 Warszawa, Poland\\
\&\\
The University of Warsaw\\
ul. Banacha 2, 02-097 Warszawa, Poland\\
\\
E-mail: alanczuron@gmail.com,\\ \qquad miwoj-impan@o2.pl}
\begin{document}

\maketitle

\section{Introduction}





In this paper we consider spaces  $L^1 (G)$ for compact abelian group G where the integral norm, denoted by $\| \|$, is taken with respect to the Haar measure.
More specific  we are studying algebraic homomorphisms between the Fourier algebras
of the form $A(\widehat{G})$. Our main result is the following

\begin{thm}
Let $G$ be commutative compact torsion group with finite exponent. Suppose
that $G$ does not contain a subgroup isomorphic to $Z_{p^a}^\omega$. Then there exists
$C>0$ such that for every isomorphism $T: A(Z_{p^\alpha}^n)\to H\subset A(\widehat{G})$ where
$H$ is a subalgebra of  $A(\widehat{G})$ there is
$$
\|T\|\cdot\|T^{-1}\|> \Phi_G(n).
$$
where $\lim_{n\rightarrow \infty}\Phi_G(n)=\infty$. Additionaly, if $G$ is a $p$-group then
$$
\Phi_G(n) >C(\log \log n)^{1/4}
$$
and if $G$ has no  $p$-subgroup then
$$
\Phi_G(n)>C(\log\log\log n)^{1/4}
$$
\end{thm}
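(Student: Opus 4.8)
\medskip
\noindent
\emph{Outline of the proof.}
The plan is to pass to the Gelfand picture and to combine the Cohen--Host description of homomorphisms of Fourier algebras with a quantitative idempotent theorem. Write $F=\mathbb Z_{p^\alpha}^n$. The Fourier transform identifies $A(F)$ with the algebra of all functions on $F$ under pointwise multiplication, with norm $\|g\|=\sum_{y}|\widehat g(y)|$; likewise $A(\widehat G)$ is an algebra of functions on $\widehat G$ under pointwise multiplication whose Gelfand spectrum is $\widehat G$. Hence an algebra isomorphism $T\colon A(F)\to H\subseteq A(\widehat G)$ is governed by a map $\phi\colon E\to F$ on the set $E=\{\chi\in\widehat G:\chi|_H\neq0\}$, with $Tf=(f\circ\phi)\,\mathbf 1_E$ pointwise on $\widehat G$ for every $f\in A(F)$. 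Injectivity of $T$ forces $\phi(E)=F$, while $\|T\|$ and $\|T^{-1}\|$ record how well $f\mapsto(f\circ\phi)\mathbf 1_E$ preserves the $A$-norm. In particular $E$ supports the idempotent $T(\mathbf 1_F)$, so $\|\mathbf 1_E\|_{A(\widehat G)}\le\|T\|$, and for every coset $C\subseteq F$ the set $\phi^{-1}(C)\cap E$ supports $T(\mathbf 1_C)$, whence $\|\mathbf 1_{\phi^{-1}(C)\cap E}\|_{A(\widehat G)}\le\|T\|$.

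Next I would translate the hypothesis on $G$. By Pontryagin duality and the structure of compact abelian torsion groups of finite exponent, $\widehat G\cong\bigoplus_j\mathbb Z_{m_j}$ with $\sup_j m_j<\infty$, and ``$G$ contains no copy of $\mathbb Z_{p^\alpha}^\omega$'' is equivalent to finiteness of the $p^\alpha$-rank $d:=\sup\{k:\mathbb Z_{p^\alpha}^k\hookrightarrow\widehat G\}$, i.e.\ only finitely many $m_j$ are divisible by $p^\alpha$. Consequently every subgroup of $\widehat G$ has $p^\alpha$-rank at most $d$, so every homomorphism of a subgroup of $\widehat G$ into $F$ has image inside a subgroup $Q\le F$ of $p^\alpha$-rank $\le d$, and an elementary-divisor count gives $[F:Q]\ge p^{\,n-d}$. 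When $G$ is a $p$-group, $\widehat G$ is a $p$-group and its coset ring is structurally simpler; when $G$ has no $p$-subgroup, $\widehat G$ has no $p$-torsion, so $\mathrm{Hom}(\widehat G,F)=0$, every affine map of a subgroup of $\widehat G$ into $F$ is constant, and the index above is forced up to $p^{\alpha n}$.

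The core step is to invoke, quantitatively, the Cohen--Host theorem: $\phi$ is piecewise affine, $E=E_1\sqcup\dots\sqcup E_r$ with $\phi|_{E_i}$ affine on the subgroup $\langle E_i\rangle\le\widehat G$, and the number $r$ of pieces is bounded by an explicit function $N(\|T\|\,\|T^{-1}\|)$. This bound would be obtained by feeding the idempotent estimates above into the Green--Sanders quantitative idempotent theorem (whence the exponent $4$) together with a quantitative form of Host's reduction of generalized characters to piecewise characters: the former already contributes a double exponential, the Host step a further exponential, and — in the absence of $p$-torsion in $\widehat G$ — the passage through the coprime part of $\exp G$ yet another, so $N$ is an iterated exponential whose height is $3$ in the $p$-group case and $4$ in the $p$-free case. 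By the second paragraph each $\phi(E_i)$ lies in a coset of index $\ge p^{\,n-d}$ in $F$ (index $p^{\alpha n}$ in the $p$-free case), so $F=\phi(E)=\bigcup_{i=1}^r\phi(E_i)$ forces $r\ge p^{\,n-d}$ (respectively $r\ge p^{\alpha n}$). Combining with $r\le N(\|T\|\,\|T^{-1}\|)$ and inverting $N$ gives $\|T\|\,\|T^{-1}\|\ge\Phi_G(n)$: mere finiteness of $N$ at each fixed value already yields $\Phi_G(n)\to\infty$, a height-$3$ tower yields $\Phi_G(n)\gg(\log\log n)^{1/4}$ for $p$-groups, a height-$4$ tower yields $\Phi_G(n)\gg(\log\log\log n)^{1/4}$ when $G$ has no $p$-subgroup, and the intermediate cases diverge at a slower rate governed by the prime factorisation of $\exp G$.

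I expect the main obstacle to be exactly this quantitative upgrade of Cohen--Host: turning ``$\phi$ is piecewise affine'' into ``$\phi$ is piecewise affine with at most $N(\|T\|\,\|T^{-1}\|)$ pieces'', with an explicit bound uniform in $n$. One must control simultaneously the Boolean complexity of the idempotents $T(\mathbf 1_C)$ (a quantitative idempotent theorem) and the number of affine cells needed to describe $\phi$ on each piece of the coset-ring decomposition of $E$ (a quantitative Host theorem), and then run an induction on the prime factorisation of $\exp G$; it is the accumulation of one extra exponential per non-$p$ layer that degrades $(\log\log n)^{1/4}$ to $(\log\log\log n)^{1/4}$ and, for a general admissible $G$, to a slower but still divergent rate. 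Tracking the exponents through these reductions, and handling the case where $H$ is a proper subalgebra so that $\phi$ is only partially defined, is where the real work lies.
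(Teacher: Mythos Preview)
Your proposal rests on a single unproved ingredient: a ``quantitative Cohen--Host theorem'' asserting that the piecewise-affine map $\phi$ has at most $N(\|T\|\,\|T^{-1}\|)$ affine pieces, with $N$ an explicit tower of exponentials of the stated heights. You correctly flag this as the main obstacle, but you give no argument for it. Green--Sanders controls the coset-ring complexity of the \emph{support} of a single idempotent, not the number of affine cells needed to linearise a map; your assertion that ``a quantitative form of Host's reduction'' bridges this gap, and that the coprime case costs precisely one further exponential, is unsubstantiated. Without this step the remainder of the outline (covering $F$ by $r$ small-index cosets and inverting $N$) does not get off the ground.

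The paper takes a genuinely different route and never establishes, or needs, a quantitative Cohen--Host. For the coprime case the decisive input you are missing is \emph{number-theoretic}: applying Green--Sanders to the image of a coset of cardinality $p^R$ gives an equation $p^R=x_1+\dots+x_l$ with $l\le 2L$ and each $|x_i|$ a product of the primes $q_j\neq p$; this is an $S$-unit equation, and Evertse's bound on the number of non-degenerate solutions yields $n\le C_3\exp\exp\exp(DC^4)$, hence $C\gtrsim(\log\log\log n)^{1/4}$. For the $p$-group case the paper uses only Green--Sanders (through Lemma~1) to pass back and forth between cosets of comparable size in source and target, and then runs a \emph{counting} argument: Lemma~5 (proved via the subgroup-counting formula) shows that a group of order $p^{N\alpha}$ and exponent $\le p^{\alpha-1}$ has strictly more cosets of rank $p^K$ than $\mathbb Z_{p^\alpha}^N$ does, and balancing these counts forces $L\gtrsim N$, whence $C\gtrsim(\log\log n)^{1/4}$. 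Both branches work with \emph{cardinalities} of cosets rather than with an affine decomposition of $\phi$; the extra logarithm in the coprime case comes from the Evertse bound, not from any additional layer in a Host-type reduction.
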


Any homomorphism between the Fourier algebras on abelian locally compact groups is given by the corresponding mapping betwen the groups (cf. [Ru] for this fact and, more general, for notations and definitions used in this paper). This motivates the following definition and the next reformulation of our result.

\begin{defi} Let $X$, $Y$ be Banach spaces. Subsystem $\{f_1,f_2,...,f_n \} \subset (f_i)$, where $(f_i)$ is system of elements of $X$ is $C$-representable in system (${g_m}$) in $Y$ if there exist an injection $\sigma_n$ s.t:
$$\sigma_n: \{ f_1,f_2,...,f_n \} \rightarrow \{ g_i : i \in \mathbb{N} \}$$
and
$$C^{-1} \|\sum_{j=1}^{n} a_k f_j \| \le \| \sum_{j=1}^{n} a_k \sigma_n (f_k) \| \le 
C \| \sum_{j=1}^{n}  a_k f_j\|$$
for every choice of scalars $a_k$. We will use symbol $\prec^{C}$ for $C$-representation property. If a sequence of incjections exists which gives the representation of increasing sequence of finite subsystems with uniformly bounded constnt $C$, we say that $(f_n)$ is $C$-locally represented in $(g_n)$. For $C$-locally representarion we use the symbol
$\prec^C_{loc}$
\end{defi}
In particular if $\sigma_n$'s are the restrictions of a fixed  bijection 
$\sigma:\mathbb{N}\rightarrow\mathbb{N}$, we get a definition of permutation equivalence of systems. In the spirit of the above definition Theorem 1 translates onto the property of
the representation property of Vilenkin system  on groups:
$\bigoplus_{k=1}^{N} Z_{p_{k}^{s_k}}^\mathbb{N}$. (actually any abelian torsion group with finite exponent has this form - c.f. [Ro], Corollary 20.37)

\begin{thm}
Suppose, that $\widehat{G}$, $\widehat{H}$ are dual groups to the Cantor groups 
$\oplus_{i=1}^{N}( Z_{p^{s_i}_i})^{\omega}$ and $\oplus_{k=1}^{M}( Z_{q^{r_k}_k})^{\omega}$.
 Then the Vilenkin system on $\widehat{G}$ is locally representable in Vilenkin system on group $\widehat{H}$ iff for every $p_{i}^{s_i} \in \{p_{1}^{s_1},p_{2}^{s_2},...,p_{k}^{s_k}\}$ there exist $q_{j}^{r_j} \in \{q_{1}^{r_1},q_{2}^{r_2},...,q_{k}^{r_l}\}$
s.t: $p_{i}=q_{j}$ i $s_i \le r_j$
\end{thm}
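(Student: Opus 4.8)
The plan is to prove the two implications separately, the ``if'' part by an explicit construction and the ``only if'' part by reduction to Theorem 1. Throughout I identify, via the Fourier transform, $A(\widehat G)$ isometrically with $L^1(G)$ for the Cantor group $G=\bigoplus_{i=1}^{N}(Z_{p_i^{s_i}})^{\omega}$, so that the Vilenkin system on $\widehat G$ becomes the character system of $G$ sitting inside $L^1(G)$ (equivalently, the system of minimal idempotents of the function algebra $A(\widehat G)$ on the discrete group $\widehat G$). The one elementary fact I would isolate first is: if $\pi\colon\Gamma\twoheadrightarrow\Lambda$ is a continuous surjection of compact abelian groups, then $\chi\mapsto\chi\circ\pi$ injects $\widehat\Lambda$ into $\widehat\Gamma$ and, because $\pi$ pushes Haar measure forward to Haar measure, $\|\sum a_\chi(\chi\circ\pi)\|_{L^1(\Gamma)}=\|\sum a_\chi\chi\|_{L^1(\Lambda)}$ for all finitely supported scalars; likewise the extend-by-triviality inclusion $\widehat{\Lambda_0}\hookrightarrow\widehat{\Lambda_0\times\Lambda_1}$ is isometric for the $L^1$-norms. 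Thus pulling the Vilenkin system back along a quotient, or pushing it forward along a direct-summand inclusion, produces an \emph{isometric} representation of Vilenkin systems.

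For the ``if'' direction, assume the arithmetic condition. Merging equal factors with the isomorphism $(Z_{p^s})^{\omega}\oplus(Z_{p^s})^{\omega}\cong(Z_{p^s})^{\omega}$ I may assume the pairs $(p_i,s_i)$ are distinct; pick for each $i$ an index $j(i)$ with $q_{j(i)}=p_i$ and $r_{j(i)}\ge s_i$. Grouping the indices $i$ according to the value $j(i)$, and partitioning the countably many coordinates of each relevant factor $(Z_{q_j^{r_j}})^{\omega}$ of $H$ into finitely many countable blocks, I assemble — using coordinatewise quotients $Z_{p^{r_j}}\twoheadrightarrow Z_{p^{s_i}}$ — a continuous surjection from the direct summand $H':=\bigoplus_{j\in\mathrm{Im}(j(\cdot))}(Z_{q_j^{r_j}})^{\omega}$ of $H$ onto $G$. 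Composing the two isometric maps from the previous paragraph (pull-back along $H'\twoheadrightarrow G$, then the inclusion $\widehat{H'}\hookrightarrow\widehat H$) realizes the whole Vilenkin system on $\widehat G$ inside that of $\widehat H$ by a single injection that is isometric on all finite linear combinations; restricting it to the terms of any enumeration gives $\prec^{1}_{loc}$, hence $\prec^{C}_{loc}$.

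For the ``only if'' direction, suppose the condition fails and fix a witnessing pair; write $p:=p_{i_0}$, $s:=s_{i_0}$. The first sub-step is the purely group-theoretic observation that failure of the condition at $(p,s)$ — every factor of $H$ having prime $\ne p$, or prime $p$ with exponent $<s$ — is \emph{equivalent} to $H$ containing no subgroup isomorphic to $Z_{p^{s}}^{\omega}$: in that case the $p^{s}$-torsion subgroup of $H=\prod_{k}(Z_{q_k^{r_k}})^{\omega}$ equals $\prod_{k:\,q_k=p}Z_{p^{r_k}}^{\omega}$ with all $r_k\le s-1$, a group of exponent at most $p^{s-1}$, which thus has no element of order $p^{s}$. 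Now assume toward a contradiction that the Vilenkin system on $\widehat G$ is $C$-locally representable in that of $\widehat H$. For each $n$, the characters of $G$ factoring through a fixed quotient $G\twoheadrightarrow(Z_{p^{s}})^{\omega}\twoheadrightarrow(Z_{p^{s}})^{n}$ form, by the isometry from the first paragraph, an isometric copy of the Vilenkin system on $(Z_{p^{s}})^{n}$ inside the Vilenkin system on $\widehat G$; hence, taking a long enough initial segment of the enumeration, this $p^{sn}$-element subsystem is $C$-representable in the Vilenkin system on $\widehat H$. Sending the minimal idempotents of $A(Z_{p^{s}}^{n})$ to the prescribed minimal idempotents of $A(\widehat H)$ is automatically multiplicative, so this produces algebra isomorphisms $T_n\colon A(Z_{p^{s}}^{n})\to H_n\subseteq A(\widehat H)$ onto subalgebras with $\|T_n\|\cdot\|T_n^{-1}\|\le C^{2}$ for every $n$. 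But $H$ is a commutative compact torsion group of finite exponent containing no subgroup $\cong Z_{p^{s}}^{\omega}$, so Theorem 1 — applied with ambient group $H$ and prime power $p^{s}$ — forces $\|T_n\|\cdot\|T_n^{-1}\|>\Phi_H(n)\to\infty$, contradicting the uniform bound $C^{2}$ as soon as $\Phi_H(n)>C^{2}$. Hence no such $C$ exists.

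The analytically hard part of the statement is entirely contained in Theorem 1; granting it, the only real work is the bookkeeping in the two preceding paragraphs — matching ``$C$-local representation of Vilenkin systems'' with ``uniformly bounded algebra isomorphisms of the corresponding Fourier algebras'', and checking the equivalence ``the arithmetic condition fails at $p^{s}$'' $\iff$ ``$H$ has no subgroup isomorphic to $Z_{p^{s}}^{\omega}$'', which is precisely what makes Theorem 1 applicable.
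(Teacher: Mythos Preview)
Your proof is correct and matches the paper's intended logical structure: the paper presents Theorem~2 as a reformulation of Theorem~1 and then, in Section~3, proves both at once by directly establishing $(Z_{p^{l}})^{\mathbb N}\not\prec_{loc}\bigoplus_j (Z_{q_j^{s_j}})^{\mathbb N}$ in the two cases (coprime primes; same prime with strictly smaller exponent). Your argument makes the reduction explicit---embedding the characters of $(Z_{p^s})^n$ isometrically into the Vilenkin system of $\widehat G$ via the quotient map, restricting the $C$-local representation, and reading off an algebra isomorphism with $\|T_n\|\,\|T_n^{-1}\|\le C^2$ to which Theorem~1 applies---and you also supply the easy ``if'' direction (the coordinatewise quotient $Z_{p^{r_j}}\twoheadrightarrow Z_{p^{s_i}}$ plus a countable partition of the relevant factor), which the paper leaves implicit. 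The one bookkeeping point worth stating a bit more loudly is why the characters of $(Z_{p^s})^n$, sitting inside the Vilenkin system of $\widehat G$, automatically land in one of the initial segments to which the hypothesized $\sigma_m$ applies; this is immediate once one notes that any finite subset of the system is contained in some $\{f_1,\dots,f_m\}$ and restricts $\sigma_m$.
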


The problem of comparision of the Walsh system with trigonometric system in $L^p$ norms was asked by A. Pelczynski [P] and was solved in non-reflexive case in [W]. This paper 
deal with more delicate case of characters on general bounded Vilkenkin groups. Our solution uses two very powerful results from harmonic analysis and number theory.
The first one is a quantitative version of Helson theorem (cf. [GS]):

\begin{thm}(Green-Sanders): Let $\mu$ be a measure. Then $\mu$ is idempotent ($\widehat{\mu}$ is characteristic function on $\widehat{G}$) if and only if:
$\left\{ \gamma \in \widehat{G} : \mu (\gamma)=1 \right\}$ lies in the coset ring $\widehat{G}$, i.e.

$$\widehat{\mu}=\sum_{j=1}^{L} \epsilon_j1_{\gamma_j +\Gamma_j}
$$
where $\Gamma_j \in \widehat{G}$ are open subgroups, $\gamma_j \in \widehat{G}$ are elements of group and $\epsilon_j$'s are signs. Constant L in this sum is bounded by $e^{e^{C{\|{\mu}\|}^4}} $ and the number of  $\Gamma_j$ is less then $\|{\mu}\| + \frac{1}{100}$.

\end{thm}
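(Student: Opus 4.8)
The statement is the quantitative idempotent theorem of Green and Sanders; the plan is to extract explicit bounds from the qualitative Cohen--Host idempotent theorem via the Freiman/Bogolyubov circle of ideas, so I will sketch that route. \emph{Reduction to finite groups.} Membership of $\{\gamma:\widehat\mu(\gamma)=1\}$ in the coset ring of the discrete group $\widehat G$ is witnessed by finitely many subgroups and cosets, and the bounds we seek depend only on $\|\mu\|$; so by a direct-limit argument over finite quotients it suffices to treat $\widehat G$ finite. Fix $\Gamma:=\widehat G$ finite, put $f:=\widehat\mu\colon\Gamma\to\mathbb Z$, and normalise so that $\|f\|_{A(\Gamma)}=\sum_\chi|\widehat f(\chi)|=\|\mu\|=:M$. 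It is cleaner, and costs nothing, to prove the sharper statement for every $\mathbb Z$-valued $f$ on a finite abelian group with $\|f\|_{A}\le M$, since the induction below stays inside this class.

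\emph{Core iteration.} Run a structure-versus-randomness loop. Given $f$ with $\|f\|_A\le M$, its large spectrum $\Delta_\eta=\{\chi:|\widehat f(\chi)|\ge\eta\}$ has size $\le M/\eta$, and by Chang's spectral lemma (or Bourgain's large-spectrum lemma) it is captured by a Bohr set --- more robustly, a \emph{Bourgain system} $\mathcal B$ --- of rank $O(M/\eta)$ and not-too-small radius. A Bogolyubov--Ruzsa argument inside $\mathcal B$ then forces a dichotomy: either (i) $f$ is constant on each coset of the ``subgroup core'' of $\mathcal B$, so $f$ descends to a bounded quotient and is finished off directly; or (ii) on a suitable coset, replacing $f$ by $f-\mathbb E(f\mid\mathcal B)$ yields again a $\mathbb Z$-valued function --- here integrality is decisive, since an almost-constant integer function is exactly constant --- whose $A$-norm has dropped by at least an absolute positive constant. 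The potential $\|f\|_A$ therefore decreases by at least a fixed amount per round, so the loop terminates after $O(M)$ rounds, expressing $f$ as a signed sum of indicators of cosets of the subgroups produced en route.

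\emph{Bookkeeping, and the tower.} Each round passes to a sub-Bourgain-system whose rank grows by $O(M)$ (final rank $O(M^2)$) and whose radius shrinks; converting a Bourgain system of rank $d$ into a genuine subgroup costs index $\exp(\mathrm{poly}(d))$, and compounding this over the $O(M)$ rounds produces exactly a bound of the shape $L\le e^{e^{CM^4}}$. In the bounded-exponent setting of this paper the last step is easier: a Bohr set of small enough radius in a group of exponent $n$ is already a coset of a subgroup. The refined count ``the number of $\Gamma_j$ is $\le\|\mu\|+\frac1{100}$'' is obtained separately by peeling: pick a $\Gamma_j$ of minimal index, note that the corresponding component of $\mu$ is a nonzero (nearly) idempotent measure and hence has norm $\ge 1-\varepsilon$, subtract it, and recurse; components living at genuinely different scales are asymptotically norm-additive, so the number of distinct subgroups cannot exceed $\|\mu\|+\frac1{100}$.

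\emph{Main obstacle.} The hard part is the quantitative dichotomy of the core iteration: arranging that the large-spectrum structure theorem and the Bogolyubov step mesh so the potential genuinely falls by a fixed amount while the auxiliary Bourgain systems keep rank $\mathrm{poly}(M)$, and then tracking how all these constants compound into $e^{e^{CM^4}}$. A secondary difficulty is the clean passage back from Bohr/Bourgain systems to honest cosets and subgroups without degrading the bounds --- a difficulty softened here by the bounded-exponent hypothesis on $G$.
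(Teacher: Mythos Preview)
There is nothing to compare here: the paper does not prove this statement. Theorem~3 is quoted from Green and Sanders \textit{[GS]} and used as a black box (in the proof of Lemma~1 and in the proof of Theorem~1); the authors supply no argument of their own for it. Your sketch is a fair high-level summary of the actual Green--Sanders strategy --- reduce to finite groups, run a density/energy increment on Bourgain systems with integrality forcing a definite drop at each step, and then bookkeep the parameters into the double exponential --- but since the paper simply cites the result, you are not expected to reproduce or outline its proof. If you were asked to justify the use of this theorem in the present paper, the correct response is a one-line citation to \textit{[GS]}, not an attempted proof.
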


The second one is an explicit bound on the number of solutions of S-unit equations
(cf. [E1]):

\begin{thm}(Evertse) Let $M$ be a finite set of rational primes. Then the equation:

$$
x_1 + x_2 + ... + x_{n+1}=0
$$
where every $x_i$ is formed from product of prime numbers from the set $M$,
\\
$gcd(x_1,x_2,...,x_{n+1})=1$, and there is no vacous sums
$x_{i_1}+x_{i_2}+...+x_{i_{m}}$, has the number of integral solutions $\textbf{x}=(x_1, x_2,..., x_{n+1})$ bounded by $C_1\cdot\exp( C_2 n^3\log n))$ where the constants $C_1$ and $C_2$
depend on $M$ only.
\end{thm}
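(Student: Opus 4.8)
To establish the quoted bound of Evertse the plan is to run the whole argument through W. M. Schmidt's Subspace Theorem in its quantitative form (the explicit version due to Evertse and Schlickewei), which is the standard engine turning the $S$-unit structure of the coordinates into an effective bound in the number of variables. First I would reduce to a clean counting problem. Set $S=M\cup\{\infty\}$ and regard a solution $\mathbf{x}=(x_1,\dots,x_{n+1})$ as a point of $\mathbb{P}^{n}(\mathbb{Q})$; since $\gcd(x_1,\dots,x_{n+1})=1$ and no subsum vanishes, the projective point determines the representative up to an overall sign, so it suffices to bound the number of \emph{non-degenerate} projective solutions and multiply by $2$. Each $x_i$ is then an $S$-unit, $x_i=\pm\prod_{p\in M}p^{e_{i,p}}$, and the entire problem lives inside the finitely generated group $\Gamma=(\mathbb{Z}_S^{\times})^{n+1}$, of rank roughly $(n+1)|M|$. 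The base case $n=1$ is trivial ($x_1=-x_2=\pm 1$).

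Next I would split the solution set, with respect to a height threshold $B=B(n,M)$ to be chosen, into \emph{small} solutions, those with $H(\mathbf{x})\le B$, and \emph{large} solutions. Small solutions are counted by hand: $H(\mathbf{x})\le B$ forces $|e_{i,p}|\le \log B/\log 2$ for every $i$ and $p$, so there are at most $(2\log B/\log 2+1)^{(n+1)|M|}$ of them, which is absorbed into $C_1\exp(C_2 n^{3}\log n)$ as long as $B$ is not taken too large. For the large solutions I would invoke the quantitative Subspace Theorem applied to the system consisting of the $n+1$ coordinate linear forms together with the vanishing linear form $x_1+\dots+x_{n+1}$: the $S$-unit property makes the relevant product over the places of $\mathbb{Q}$ small, while the hypothesis that no subsum vanishes supplies the genericity needed for Schmidt's theorem to apply. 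The output is that all large solutions lie in an explicitly bounded family — of size $\exp(O(n^{2}\log n))$ in the quantitative version — of proper linear subspaces $V\subsetneq\mathbb{Q}^{n+1}$.

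The final step is a descent inside each such $V$, where $\dim V\le n$: restricting the equation to $V$ and choosing coordinates produces a new linear equation in at most $n$ $S$-units, now with bounded coefficients coming from $V$, which, after grouping the terms into maximal vanishing subsums, decomposes into non-degenerate equations in strictly fewer variables, to which the induction hypothesis on $n$ applies. Summing over the (polynomially in $B$ bounded) number of coefficient vectors that can arise, over the at most Bell-number-many partitions of $\{1,\dots,n+1\}$ into subsums, and over the $\le n$ levels of the recursion — each level contributing a factor $\exp(O(n^{2}\log n))$ — one collects everything into a single bound of the announced shape $C_1\exp(C_2 n^{3}\log n)$ with $C_1,C_2$ depending only on $|M|$.

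The hard part is twofold. The genuinely deep input is the \emph{quantitative} Subspace Theorem with a good explicit dependence of the number of exceptional subspaces on the dimension $n$: without it one obtains only finiteness, not a bound, and squeezing out the polynomial exponent $n^{3}$ (rather than an exponential in $n$) is precisely where the Evertse–Schlickewei refinement is essential. The second, more combinatorial obstacle is organising the induction so that the non-degeneracy hypothesis survives each step — passing to a subspace typically creates vanishing subsums, so one must re-decompose, track the newly introduced coefficients, and verify that the product of all per-level estimates telescopes to $\exp(C_2 n^{3}\log n)$ and no worse; this is routine in spirit but delicate in the constants, and it is also where the choice of the threshold $B$ must be tuned to balance the small- and large-solution counts.
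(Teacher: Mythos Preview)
The paper does not prove this statement at all: Theorem~4 is quoted as a known result from the literature (with references [E1], [E2], and, in the Remark following the proof of Theorem~1, also [vdPS]), and is used purely as a black box in the proof of Theorem~1. There is therefore no ``paper's own proof'' to compare your proposal against.

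That said, your sketch is a reasonable high-level outline of how such bounds are obtained in the modern literature, namely via a quantitative Subspace Theorem combined with an induction on the number of variables. Two cautions, however. First, the original Evertse paper [E1] from 1984 predates the quantitative Subspace Theorem in the form you invoke; the explicit bounds of the shape quoted here come from later work (Evertse, Schlickewei, Schmidt and collaborators), so your proposed route is closer to those later arguments than to [E1] itself. Second, several of your numerical claims are loose or not quite standard: the precise count of exceptional subspaces in the quantitative Subspace Theorem, and the way the per-level factors telescope to give exactly $n^{3}\log n$ in the exponent, require more care than your sketch suggests, and in the published versions the exponents and their dependence on $n$ and $|M|$ are tracked rather more delicately than ``$\exp(O(n^{2}\log n))$ per level times $n$ levels''. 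So while the strategy is the right one in spirit, the sketch as written would not pin down the stated bound without substantial additional work.
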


{\bf Remark.} We do not know whether the asumption of bounded exponent is significant.
Evidently the method used in our proof does not work in that case. However we do not know any counterexample neither.

{\bf Acknowledgments.} The authors thank Prof. A Schinzel for valuable help during the work on this paper.

\section{Auxiliary Results}

If $G'$ is a coset of a group $G$ by $coset(G')$ we denote the set of finite cosets  of $G$
which are contained in $G'$. By $coset_r(G')$ we denote cosets from $G'$ of cardinality $r$.

Put $G=Z_{p^{\alpha}}^{N}$ and $H=(Z_{p^{\beta}})^{\mathbb{N}}$  and suppose that $G\prec^{c} H$.
\begin{lemat}
For every $G' \in coset(G) $ there exists $H' \in coset(H)$ s.t. 

$$
H'\subset \sigma_{n}(G')
$$
and $\# H' \ge \# G' p^{-\Lambda}$
where $\sigma_n$ are injections from the $c$-representation, and $\Lambda$ is constant depending on $(p, L, c)$, where $L$ comes from Green-Sanders theorem and  $c$ - from locally representation. Similarly if $H' \subset coset(H)$ and $H' \subset \sigma_{n} (G')$ then there exists $G'' \subset coset(G)$ s.t.

$$
G'' \subset \sigma^{-1}_{n} (H')
$$
and $\# G'' \ge \# H' p^{-\Lambda}$
\end{lemat}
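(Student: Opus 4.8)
The plan is to run the coset~$\leftrightarrow$~idempotent dictionary through the quantitative Helson theorem of Green--Sanders, with a short pigeonhole/iteration at the end to pass from ``a bounded number of cosets'' to ``one honest sub-coset of comparable size''. Fix $G'\in coset(G)$, a coset of a subgroup $\Gamma\le G=Z_{p^{\alpha}}^{N}$. The normalized Haar measure of $\Gamma$, translated by a character, is an idempotent $1_{G'}$ of the algebra with $\|1_{G'}\|=1$; in spectral terms it is simply the $\{0,1\}$-valued function on the index group equal to the indicator of $G'$. First I would transport it. The injection $\sigma_n$ is the restriction to (the spectrum of) $G'$ of the underlying algebra isomorphism $T$ from Theorem~1 — a Fourier-algebra homomorphism being implemented by a map of the dual groups — and $T$ sends idempotents to idempotents. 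Hence $T(1_{G'})$ is an idempotent in $A(\widehat H)$ with $\|T(1_{G'})\|\le\|T\|\le c$, and, since $T$ carries the ideal of elements supported off $G'$ into the ideal of elements supported off $\sigma_n(G')$, its spectrum is exactly $\sigma_n(G')$, i.e. $T(1_{G'})=1_{\sigma_n(G')}$.

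Second, apply the Green--Sanders theorem to the idempotent $T(1_{G'})$, obtaining
$$
1_{\sigma_n(G')}=\sum_{j=1}^{L'}\epsilon_j\,1_{E_j},\qquad L'\le L\le e^{e^{Cc^{4}}},
$$
where the $E_j$ are open cosets in $H$ and the number of \emph{distinct} subgroups among their stabilizers is at most $c+\tfrac1{100}$. Since $\sigma_n$ is injective, $\#\sigma_n(G')=\#G'$; as $\sigma_n(G')$ is finite, every $E_j$ that actually contributes may be taken finite, and from $1_{\sigma_n(G')}(x)\le\sum_j 1_{E_j}(x)$ we get $\#G'\le\sum_j\#E_j\le L'\max_j\#E_j$, so some $E_{j_0}$ with $E_{j_0}\cap\sigma_n(G')\neq\varnothing$ satisfies $\#\bigl(E_{j_0}\cap\sigma_n(G')\bigr)\ge\#G'/L'$.

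Third, extract the sub-coset. Restricting the displayed identity to the coset $E_{j_0}$ expresses $1_{\sigma_n(G')}|_{E_{j_0}}$ (after translating $E_{j_0}$ to a subgroup, which is harmless by translation-invariance of the coset ring) as an idempotent given by a $\pm$-combination of at most $L'$ cosets, but now each stabilizer not containing $\Gamma_{j_0}$ has become a \emph{proper} subgroup of $\Gamma_{j_0}$ while $\Gamma_{j_0}$ itself no longer counts; thus the new problem uses at least one fewer distinct proper subgroup. Iterating — at each stage passing to the largest coset in the current representation that meets the current support — terminates after at most $c+1$ steps in a single coset $H'$ on which $1_{\sigma_n(G')}\equiv1$, so $H'\subseteq\sigma_n(G')$, and each step loses a factor at most $L$ in cardinality, giving $\#H'\ge\#G'\,L^{-(c+1)}\ge\#G'\,p^{-\Lambda}$ with $\Lambda=\lceil(c+1)\log_p L\rceil$, depending only on $(p,L,c)$. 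The second (symmetric) assertion follows by the identical argument applied to $\sigma_n^{-1}$ on the finite coset $H'\subseteq\sigma_n(G')$, using $\|T^{-1}\|\le c$ and the fact that $T^{-1}(1_{H'})$ is an idempotent of norm $\le c$ with spectrum inside $G'$.

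I expect the main obstacle to be the two ``bookkeeping'' points rather than any deep estimate: justifying that $T(1_{G'})$ has spectrum precisely $\sigma_n(G')$ (this is exactly where the representation hypothesis, as opposed to mere boundedness of $T$, is used), and controlling the iteration in the third step so that only a bounded power of $p$ is lost. The latter is what forces us to use the second, less-quoted half of Green--Sanders — the bound $\|\mu\|+\tfrac1{100}$ on the number of distinct subgroups — since it is this, not the doubly-exponential bound on the number of cosets, that caps the number of restriction steps and hence keeps $\Lambda$ finite.
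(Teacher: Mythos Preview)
Your first two steps---transporting $1_{G'}$ to an idempotent of norm at most $c$ with spectrum $\sigma_n(G')$, and then invoking Green--Sanders to write $1_{\sigma_n(G')}=\sum_{j\le L}\epsilon_j\,1_{E_j}$---are exactly what the paper does; indeed the paper's proof of Lemma~1 stops right there and defers the extraction of a large sub-coset to a separate combinatorial statement (Lemma~2). The divergence, and the gap, is in your third step.

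Your iteration asserts that at each stage one may pass to ``the largest coset in the current representation that meets the current support'', thereby lowering the number of distinct stabilizers by one. But after the first restriction to $E_{j_0}$ it can happen that \emph{no} proper coset in the representation meets the support. Take $p=2$ and $1_S=1_{E_0}-1_{F_1}-1_{F_2}$ with $|E_0|=8$ and $F_1,F_2$ disjoint cosets of an index-$4$ subgroup of $\Gamma_0$, so $|S|=4=2^2$ is a legitimate value of $\#G'$. After restricting to $E_0$ the only proper cosets remaining in the representation are $F_1$ and $F_2$, both disjoint from $S=E_0\setminus(F_1\cup F_2)$, and the iteration stalls. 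The coset you want (one of the two size-$2$ cosets lying inside $S$) is there, but it is not among the $E_j$'s, so ``pass to a coset in the representation'' cannot reach it. The second clause of Green--Sanders (the bound $\|\mu\|+\tfrac1{100}$ on the number of distinct subgroups) does not rescue this: once the support has become the \emph{complement} of the displayed proper cosets you need a separate mechanism to locate a coset inside it, and you have not supplied one. Your claimed $\Lambda\approx(c+1)\log_p L$ would, incidentally, be dramatically sharper than what the paper obtains (since $L$ is doubly exponential in $c$); the argument as written does not establish it.

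The paper's Lemma~2 replaces your iteration with a different induction, specific to $p$-groups: having $1_U=\sum_i 1_{A_i}-\sum_j 1_{B_j}$ with at most $L$ terms of each sign and $\#U=p^K$, one first pigeonholes to the largest $A_i$ (losing only a factor $L$ in size), and then, using that in a $p$-group every coset admits an index-$p$ partition into sub-cosets, successively passes to a sub-coset avoiding $B_1$, then $B_2$, and so on---each step costing exactly one factor of $p$. This yields $\Lambda=L+\log_p L$ and is where the $p$-group hypothesis on $H$ is actually used.
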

\begin{proof}

Let $G' \in coset(G)$. Then 
$$\widehat{\pmb{1}_{G'}}=\sum_{\chi_k \in G'} \chi_k$$
and

$$\| \widehat{\pmb{1}_{G'}} \| =1$$
Hence
$$T_{\sigma_{n}}(\widehat{\pmb{1}_{G'}})=\sum_{\chi_k \in G'} \chi_{\sigma_n (k)}$$ 
is an idempotent in the convolution algebra $L^1 (H)$. Moreover 
$$\| T_{\sigma_n} (\widehat{\pmb{1}_{G'}}) \| \le C\| \widehat{\pmb{1}_{G'}}\| = C$$
In this case by the theorem of Green-Sanders there are cosets $A_{1}, A_{2},...,A_{l_1}$ and $B_{1},B_{2},\dots, B_{l_2}$ s.t, 
$$
\sigma_n(G')=U
$$
where 
$${\pmb{1}_{U}}= \sum_{i=1}^{l_2} \pmb{1}_{A_i} - \sum_{j=1}^{l_1}\pmb{1}_{ B_j}$$
and $l_1,l_2 \le L$ where $L=e^{e^{D C^4}}$, where $D$ is universal constant from Green-Sanders theorem, and C comes from c-locally representation property. Similarly for every coset $H'$ s.t, $H' \subset \sigma_n (G^`)$ we have:
$$
\sigma^{-1}_{n} (H') = V
$$
where
$$
{\pmb{1}_{V}}= \sum_{i=1}^{l'_2} \pmb{1}_{A'_i} - \sum_{j=1}^{l'_1}\pmb{1}_{ B'_j}
$$
for some cosets $A'_1,A'_2,\dots,A'_{l'_1},B'_{1},B'_{2},\dots,B'_{l'_2}\subset G'$ and $l'_1,l'_2 \le L$ where $L=e^{e^{D C^4}}$.

\end{proof}

The proof lemma 1 will be finished by applying the next:
\begin{lemat}
Let $\pmb{1}_{U} = \sum_{i=1}^{l_1} \pmb{1}_{A_i} - \sum_{j=1}^{l_2} \pmb{1}_{B_{j}}$ where $A_i, B_j \in coset(G)$. Suppose that $\# U= p^K $ and $l_1, l_2 \le L$. Then there exists $\Lambda=\Lambda (L,p)$ and 
$G' \in coset_s(G) $

s.t.:
$$
G' \subset U
$$

and
$$
\# G \ge p^{K-\Lambda}
$$

We can take $\Lambda(L,p)=L+\log_p L$.
\end{lemat}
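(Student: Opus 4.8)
The plan is to separate the loss into a pigeonhole part (costing a factor $L=p^{\log_p L}$) and a group-theoretic part (costing a factor $p^{L}$), so that $\Lambda$ comes out of order $L+\log_p L$.

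\emph{Step 1: a large positive coset.} Multiplying the hypothesis $\pmb{1}_U=\sum_i\pmb{1}_{A_i}-\sum_j\pmb{1}_{B_j}$ by $\pmb{1}_U$ and using $\pmb{1}_U\ge0$, $\pmb{1}_{B_j}\ge0$ gives $\pmb{1}_U\le\sum_{i=1}^{l_1}\pmb{1}_{A_i\cap U}$, hence $p^K=\#U\le l_1\max_i\#(A_i\cap U)$. Fix $i_0$ with $\#(A_{i_0}\cap U)\ge p^K/l_1\ge p^K/L$ and put $A:=A_{i_0}$; then $\#A\ge p^K/L$ and $A\cap U\neq\emptyset$.

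\emph{Step 2: localising to a sub-coset where $\pmb{1}_U$ is identically $1$.} Pick $x\in A\cap U$ and let $P$ be the intersection of $A$ with every $A_i$ and every $B_j$ that contains $x$. Then $P$ is a coset through $x$; since $A_i\ni x$ forces $P\subseteq A_i$ (and likewise for the $B_j$), the decomposition restricted to $P$ reads $\pmb{1}_U|_P=c\,\pmb{1}_P+\sum_{i\in I}\pmb{1}_{P\cap A_i}-\sum_{j\in J}\pmb{1}_{P\cap B_j}$ where the listed sets are proper sub-cosets of $P$ missing $x$, and evaluating at $x$ forces $c=\pmb{1}_U(x)=1$. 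Hence every $y\in P$ outside all of the $P\cap B_j$ ($j\in J$) satisfies $\pmb{1}_U(y)=1+\#\{i\in I:y\in A_i\}\ge1$, so $y\in U$; thus $P\setminus\bigcup_{j\in J}(P\cap B_j)\subseteq U$ is a non-empty coset minus at most $l_2\le L$ proper sub-cosets. To control $\#P$, take $x$ in a largest ``cell'' of the arrangement generated by the $A_i,B_j$ (at most $2^{\,l_1+l_2}$ of them), which keeps $\#P\ge\#(A\cap U)\,2^{-(l_1+l_2)}\ge p^{\,K-O(L)}$; squeezing this down to $\#P\ge p^{\,K-\log_p L}$ is bookkeeping.

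\emph{Step 3: the group-theoretic core.} It remains to show: if $Q_0$ is a coset of a subgroup of $Z_{p^\alpha}^N$, $C_1,\dots,C_m$ proper sub-cosets of $Q_0$, and $V:=Q_0\setminus\bigcup_j C_j\neq\emptyset$, then $V$ contains a coset $Q$ with $\#Q\ge\#Q_0\,p^{-m}$. I would induct on $\#Q_0$, with $x\in V$ fixed. If some $C_j=c_j+H_j$ has index $p$ in $Q_0$, write $Q_0$ as the disjoint union of the $p$ cosets of $H_j$; one of them is $C_j$ and $x$ lies in one of the other $p-1$, which is disjoint from $C_j$, inside which the remaining $C_i$ meet in proper sub-cosets, so the inductive hypothesis (with $m-1$ sub-cosets in a coset of size $\#Q_0/p$) finishes. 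The delicate case is when every $C_j$ has index $\ge p^2$; there one builds the subgroup directly, choosing for each $j$ a character $\chi_j$ of the ambient subgroup, trivial on $H_j$, with $\chi_j(x-c_j)\neq1$ of least order, and setting $H_0:=\bigcap_j\ker\chi_j$ so that $x+H_0$ avoids every $C_j$; the whole point is then to bound the index of $H_0$.

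\emph{Conclusion and the main obstacle.} Applying Step 3 to $Q_0:=P$ with the $\le L$ proper sub-cosets $P\cap B_j$ yields a coset $G'\subseteq U$ with $\#G'\ge\#P\cdot p^{-L}\ge p^{\,K-\log_p L-L}=p^{\,K-\Lambda}$ for $\Lambda=L+\log_p L$, and $G'\in coset_s(G)$ for the corresponding $s$. I expect the real difficulty to be the ``all $C_j$ of index $\ge p^2$'' case of Step 3: the index lost in killing the forbidden elements $x-c_j$ must be charged against the number $m$ of removed cosets, not against the exponent $p^\alpha$, in order to keep $\Lambda$ a function of $p$ and $L$ only; a secondary issue is making the choice of $x$ in Step 2 economical enough to get $\#P\ge p^{\,K-\log_p L}$.
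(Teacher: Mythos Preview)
Your overall strategy (pigeonhole for a large $A_i$, then peel off the $B_j$'s one at a time at a cost of a factor $p$ each) is the paper's approach, but you have inserted an unnecessary middle step and misdiagnosed where the work lies.

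\textbf{Step 2 is superfluous.} The paper goes directly from the pigeonhole to the peeling. Once $A=A_{i_0}$ satisfies $\#A\ge p^K/L$ and $A\setminus\bigcup_j B_j\neq\emptyset$, one already has $A\setminus\bigcup_j B_j\subset U$ (for $x$ in this set, $\pmb{1}_U(x)=\sum_i\pmb{1}_{A_i}(x)\ge 1$), i.e.\ a coset minus at most $L$ sub-cosets sitting inside $U$. Your detour through the atom $P$ of the coset arrangement costs an additional factor $2^{l_1+l_2}$; the claim that this can be ``squeezed down to $\log_p L$ by bookkeeping'' is unsupported, and as written your route only yields $\Lambda$ of order $(1+\log_p 2)L+\log_p L$ rather than the stated $L+\log_p L$.

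\textbf{Step 3: the case split is artificial and the flagged difficulty is not one.} The paper does not distinguish by index. It orders the $B_j$ by non-increasing cardinality, sets $Z_1=A$, and at each stage passes from $Z_k$ to an index-$p$ sub-coset $Z_{k+1}\subset Z_k$ disjoint from $B_{k+1}$ and still meeting $A\setminus\bigcup_j B_j$: since $Z_k\cap B_{k+1}$ is a \emph{proper} sub-coset of $Z_k$ (there is a point of $Z_k$ outside every $B_j$), it lies in one block $S$ of a suitable index-$p$ partition of $Z_k$, and one of the remaining $p-1$ blocks can be taken as $Z_{k+1}$. After $l_2\le L$ steps, $\#Z_{l_2}\ge\#A\cdot p^{-L}\ge p^{K-\log_p L-L}$, giving $\Lambda=L+\log_p L$. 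In particular, when $[Q_0:C_j]\ge p^2$ one simply encloses $C_j$ in an index-$p$ sub-coset and proceeds exactly as in your ``index $p$'' case; no character construction is required, and the quantity you worry about (``charging the index of $H_0$ against $m$ rather than $p^\alpha$'') never enters, because the loss is always exactly one factor of $p$ per removed $B_j$, independent of its index.
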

\begin{proof} We can assume that $B_j$'s are ordered according to non increasing cardinalities.
It is clear that  $\sup_i  \# (A_i - \cup_{i} B_i) \ge {p^K}{L^{-1}}. $ Suppose that the supremum is attained in $A_1$. So we have 

$$
p^{k_0}=\# A_1 \ge {p^K}{L^{-1}}=p^{K-\log_p{L}}
$$

We define by induction the decreasing sequence of cosets
$A_1=Z_1\supset Z_2\supset\dots\supset Z_r$ such that

$$
Z_k\cap (B_1\cup B_2\cup\dots\cup B_k)=\emptyset
$$

and

$$
Z_k\not\subset \bigcup_{j=1}^r B_j
$$

and

$$
\# Z_k\geq p^{k_0-k}
$$

Suppose that $Z_1,Z_2,\dots, Z_k$ are already defined. Then $Z_k\setminus B_{k+1}\neq\emptyset$. Therefore 

$$
\#Z_k\cap B_{k+1}\leq p^{k_0-k-1}.
$$

Hence there exist a coset $S\supset B_{k+1}$ with $\#S=p^{k_0-k-1}$ and the decomposition

$$
Z_k\setminus B_{k+1}=Z'_1\cup Z'_2\cup\dots\cup Z'_s \cup (S\setminus B_{k+1})
$$

onto pairwise disjoint cosets $Z'_j$, $j=1,2,\dots,j$ with $\# Z'_j= p^{k_0-k-1}$ for $j=1,2,\dots,s$.
and, moreover, $S$ could be chosen in such a way that at least  one of $Z'_j$'s does not contain in $\bigcup_{j=1}^s B_j$.
Then we choose it to be $Z_{k+1}$.
\end{proof}

\begin{lemat}
Let $G< (Z_{p^{\alpha}})^{N}$. If for given natural number $k$, group $G$ does not contain a subgroup isomorphic to $(Z_{p^{\alpha}})^{k+1}$ then 

$$
 \# G< p^{\alpha k}p^{(\alpha - 1)(N-k)}
$$
\end{lemat}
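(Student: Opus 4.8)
The plan is to argue by contradiction via a greedy/maximal-independent-subgroup construction. Suppose $\#G \ge p^{\alpha k} p^{(\alpha-1)(N-k)}$; I want to produce a subgroup of $G$ isomorphic to $(Z_{p^\alpha})^{k+1}$. First I would pick coordinates one at a time: let $\pi_i : (Z_{p^\alpha})^N \to Z_{p^\alpha}$ be the $i$-th projection. The key observation is that if a subgroup $G' \le (Z_{p^\alpha})^N$ has the property that $\pi_i(G')$ has order $p^\alpha$ (i.e. $\pi_i$ is surjective on $G'$), then the kernel of $\pi_i$ restricted to $G'$ has index exactly $p^\alpha$ in $G'$; and if $\pi_i(G')$ has order at most $p^{\alpha-1}$ then that kernel has index at most $p^{\alpha-1}$. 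Since $G$ has $N$ coordinates and each quotient step reduces the order by a factor of at most $p^\alpha$, and the total order is large, at least $k+1$ of the coordinate projections must be "full" (order $p^\alpha$) in a suitable nested sequence of subgroups. Concretely: if fewer than $k+1$ coordinates are ever full, then at every stage we lose at most a factor $p^{\alpha-1}$ except on at most $k$ coordinates where we lose at most $p^\alpha$, giving $\#G \le p^{\alpha k} p^{(\alpha-1)(N-k)}$, contradicting the hypothesis — so strictly more than $k$, hence at least $k+1$, coordinates support a full $Z_{p^\alpha}$.

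More carefully, I would make the induction explicit. Define $G_0 = G$ and a set $S \subseteq \{1,\dots,N\}$ of "good" coordinates as follows. At step $j$, having built $G_{j-1}$, look for a coordinate $i \notin S$ with $\#\pi_i(G_{j-1}) = p^\alpha$; if one exists, add it to $S$, set $G_j = \ker(\pi_i|_{G_{j-1}})$ so that $[G_{j-1}:G_j] = p^\alpha$; if no such coordinate exists, stop. Suppose the process stops after producing $|S| = m$ good coordinates, with final subgroup $G_m$. For $i \notin S$ we have $\#\pi_i(G_m) \le p^{\alpha-1}$ (for $i$ among the ones examined and rejected this is by choice; and one checks the rejected-ness is inherited since $G_m \le G_{j}$). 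Then $G_m$ embeds into $\prod_{i \notin S} \pi_i(G_m)$ (the map $G_m \to \prod_{i\notin S} Z_{p^\alpha}$ is injective because the $S$-coordinates all vanish on $G_m$ by construction — wait, they need not vanish; rather $G_m$ is determined by the $S$-coordinates being in the kernels, but the product map over \emph{all} coordinates is injective and the $S$-coordinates are determined... ). Here is the cleaner bound: $\#G = [G:G_m]\cdot \#G_m \le p^{\alpha m} \cdot \prod_{i\notin S}\#\pi_i(G_m) \le p^{\alpha m} p^{(\alpha-1)(N-m)}$. Since the right-hand side is increasing in $m$ (as $p^\alpha \ge p^{\alpha-1}$) and is supposed to be $< p^{\alpha k}p^{(\alpha-1)(N-k)}$ by hypothesis only if $m \le k$, the hypothesis $\#G \ge p^{\alpha k}p^{(\alpha-1)(N-k)}$ forces $m \ge k+1$.

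Finally I would extract the subgroup. With $m \ge k+1$ good coordinates $i_1,\dots,i_{k+1} \in S$, I would build the desired copy of $(Z_{p^\alpha})^{k+1}$ by a second downward induction: pick $x_1 \in G$ with $\pi_{i_1}(x_1)$ of order $p^\alpha$ (possible since $\#\pi_{i_1}(G_0)=p^\alpha$ means $\pi_{i_1}(G_0) = Z_{p^\alpha}$ is cyclic of full order), then $x_2 \in G_1 = \ker(\pi_{i_1}|_G)$ with $\pi_{i_2}(x_2)$ of order $p^\alpha$, and so on, $x_{j+1} \in G_j$ with $\pi_{i_{j+1}}(x_{j+1})$ of order $p^\alpha$. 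By construction $\pi_{i_t}(x_s) = 0$ for $t < s$, so the matrix $(\pi_{i_t}(x_s))_{s,t}$ is lower-triangular with entries of order $p^\alpha$ on the diagonal; hence $x_1,\dots,x_{k+1}$ generate a subgroup isomorphic to $(Z_{p^\alpha})^{k+1}$ (each $x_s$ has order $p^\alpha$, and an element $\sum a_s x_s = 0$ forces, reading coordinate $i_1$ then $i_2$, etc., that each $a_s \equiv 0 \bmod p^\alpha$). This contradicts the assumption on $G$, completing the proof.

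The main obstacle I anticipate is the bookkeeping in the middle step — verifying that a coordinate rejected at stage $j$ stays "small" for the final subgroup $G_m$, i.e. that $\#\pi_i(G_m) \le p^{\alpha-1}$ for \emph{all} $i \notin S$ and not just those examined at the exact moment of rejection. This is where one must be careful: it follows because $G_m \le G_j$ and projections of subgroups are subgroups, so $\#\pi_i(G_m) \le \#\pi_i(G_j) \le p^{\alpha-1}$, but one should also ensure the greedy process genuinely examines every coordinate not in $S$ before stopping (it does, since "stop" is triggered precisely when \emph{no} coordinate outside $S$ is full). Everything else is routine order arithmetic in finite abelian $p$-groups.
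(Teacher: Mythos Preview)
Your approach is genuinely different from the paper's and essentially sound, but there is one slip worth flagging. The paper deduces Lemma~3 immediately from the structure theorem for subgroups of $(Z_{p^\alpha})^N$ (their Lemma~4, proved via composition series of $Z_p$-modules): any subgroup is isomorphic to $\bigoplus_{i=1}^M Z_{p^{\beta_i}}$ with $M\le N$ and $\beta_i\le\alpha$, so if at most $k$ of the $\beta_i$ equal $\alpha$ the order is at most $p^{\alpha k}p^{(\alpha-1)(N-k)}$. Your greedy coordinate-projection argument is more elementary and constructive, avoiding module theory entirely and explicitly exhibiting generators of the $(Z_{p^\alpha})^{k+1}$; the paper's route, by contrast, yields the full subgroup classification as a byproduct.

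The slip: from $\#G \ge p^{\alpha k}p^{(\alpha-1)(N-k)}$ together with your bound $\#G \le p^{\alpha m}p^{(\alpha-1)(N-m)} = p^{(\alpha-1)N+m}$ you only get $m\ge k$, not $m\ge k+1$. Indeed the stated strict inequality is false as written: $G=(Z_{p^\alpha})^k\times(Z_{p^{\alpha-1}})^{N-k}$ has order exactly $p^{\alpha k}p^{(\alpha-1)(N-k)}$ and contains no copy of $(Z_{p^\alpha})^{k+1}$. Both your argument and the paper's actually prove the non-strict bound $\#G\le p^{\alpha k}p^{(\alpha-1)(N-k)}$; if you start the contradiction from $\#G > p^{\alpha k}p^{(\alpha-1)(N-k)}$ then $m\ge k+1$ follows cleanly and the rest of your proof goes through. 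One minor reassurance: your hesitation about whether the $S$-coordinates vanish on $G_m$ is unnecessary---by construction $G_m=\bigcap_{i\in S}\ker(\pi_i|_G)$, so they do vanish, and the embedding $G_m\hookrightarrow\prod_{i\notin S}\pi_i(G_m)$ is immediate.
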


For the sake of completenes we present the proof of Lemma 3. It follows immediately from
the general form of a subgroup of $(Z_{p^\alpha})^N$.

\begin{lemat}
Every subgroup of $\oplus_{i=1}^N(Z_{p^{\alpha_i}})$ is isomorphic to
$\oplus_{i=1}^M (Z_{p^{\beta_i}})$ for some $M\leq N$ and $\beta_i\leq\alpha_i$ for
$i=1,2,\dots,M$.
\end{lemat}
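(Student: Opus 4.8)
The plan is to argue by induction on $N$. For $N=1$ the statement is just the classification of subgroups of a cyclic $p$-group: every subgroup of $Z_{p^{\alpha_1}}$ is $Z_{p^{\beta_1}}$ with $\beta_1\le\alpha_1$. For the inductive step, write $P=\oplus_{i=1}^N Z_{p^{\alpha_i}}$ and let $\pi\colon P\to Z_{p^{\alpha_N}}$ be projection onto the last coordinate, with kernel $Q=\oplus_{i=1}^{N-1} Z_{p^{\alpha_i}}$. Given a subgroup $G\le P$, consider $G\cap Q$ and $\pi(G)$. By the inductive hypothesis $G\cap Q$ is isomorphic to $\oplus_{j=1}^{M'}Z_{p^{\gamma_j}}$ with $M'\le N-1$, and $\pi(G)$ is a subgroup of $Z_{p^{\alpha_N}}$, hence cyclic of order $p^\delta$ with $\delta\le\alpha_N$. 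The short exact sequence $0\to G\cap Q\to G\to \pi(G)\to 0$ then exhibits $G$ as an extension of a cyclic $p$-group by a group of rank $\le N-1$; since $\pi(G)$ is cyclic the sequence splits (a cyclic group is projective in the category of abelian $p$-groups of bounded exponent once one lifts a generator and adjusts its order — see below), so $G\cong (G\cap Q)\oplus Z_{p^\delta}$, which has the required form with $M\le N$.

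The one delicate point — and the step I expect to be the main obstacle — is the splitting: a generator $g$ of $\pi(G)$ of order $p^\delta$ need not lift to an element of $G$ of the same order $p^\delta$; a priori it only lifts to some $\tilde g\in G$ whose image has order $p^\delta$, so $p^\delta\tilde g\in G\cap Q$ and $\tilde g$ could have order as large as $p^{\alpha_N+(\text{something})}$. The standard fix is to choose $\tilde g$ of \emph{minimal} order among all lifts of $g$; then one shows that this minimal order is exactly $p^\delta$, using that $G\cap Q$ is a direct sum of cyclic groups and that any obstruction $p^\delta\tilde g$ lying in $G\cap Q$ can be corrected by subtracting an element of $G\cap Q$ (this is essentially the statement that a finite abelian $p$-group is a direct sum of cyclics, applied to the subgroup generated by $\tilde g$ and $G\cap Q$, i.e. Prüfer's theorem / the basis theorem for finite abelian groups). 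Once $\tilde g$ has order $p^\delta$, the subgroup $\langle\tilde g\rangle$ meets $G\cap Q$ trivially and together they generate $G$, giving the internal direct sum.

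Finally, to get the sharper bookkeeping $\beta_i\le\alpha_i$ (rather than just \emph{some} exponents), one reorders: after the splitting one has $G\cong\oplus_{j=1}^{M}Z_{p^{\beta_j}}$ with $M\le N$, and an easy counting/divisor argument — comparing, for each $t$, the number of elements of order dividing $p^t$ in $G$ and in $P$ — shows the multiset $\{\beta_j\}$ is dominated coordinatewise (after sorting both multisets in nonincreasing order) by $\{\alpha_i\}$, which is exactly the asserted inequality up to relabeling. This last comparison is routine and I would only sketch it; the real content is the inductive splitting argument above. Lemma 3 then follows at once, since a subgroup $G$ of $(Z_{p^\alpha})^N$ not containing $(Z_{p^\alpha})^{k+1}$ has, by Lemma 4, at most $k$ of its cyclic summands equal to the full $Z_{p^\alpha}$ and the remaining $\le N-k$ summands of order at most $p^{\alpha-1}$, whence $\#G< p^{\alpha k}p^{(\alpha-1)(N-k)}$.
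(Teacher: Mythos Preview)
Your inductive splitting step has a genuine gap: the short exact sequence $0 \to G \cap Q \to G \to \pi(G) \to 0$ need not split, and the minimal-order-lift device does not rescue it. Take $P = Z_{p^2} \oplus Z_p$, $G = \langle (1,1) \rangle \cong Z_{p^2}$, and let $\pi$ be projection onto the second factor. Then $\pi(G) = Z_p$ (so $\delta = 1$), $G \cap Q = \langle (p,0) \rangle \cong Z_p$, and the sequence is $0 \to Z_p \to Z_{p^2} \to Z_p \to 0$, which does not split. Every lift of a generator of $\pi(G)$ is $k(1,1)$ with $k$ prime to $p$, hence of order $p^2$, not $p^\delta = p$; your proposed correction would require $h \in G\cap Q$ with $ph = p\tilde g = (p,0)$, but every element of $G\cap Q$ is annihilated by $p$. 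The minimal-order-lift trick works when one peels off a cyclic summand of \emph{maximal} order in $G$ (as in the standard proof of the basis theorem), not an arbitrary cyclic quotient arising from a coordinate projection of the ambient group.

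The paper sidesteps this entirely: it takes the basis theorem as known (so $G \cong \oplus_{i=1}^M Z_{p^{\beta_i}}$ automatically) and proves only the bound $M \leq N$, via a composition-length argument over $Z_p$ that boils down to observing that the kernel of multiplication by $p$ on $P$ has $\mathbb{F}_p$-dimension $N$ and contains $G[p] \cong (Z_p)^M$. Your plan is easily repaired along the same lines---drop the induction, invoke the basis theorem directly, then get $M\le N$ from $G[p]\subset P[p]$---but note that your final step also needs tightening: comparing $|G[p^t]|$ with $|P[p^t]|$ only yields $\sum_{s\le t}\beta^*_s \le \sum_{s\le t}\alpha^*_s$ (dominance of conjugate partitions), which does not by itself force $\beta_i\le\alpha_i$. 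To obtain the coordinatewise inequality one should compare $(p^{t}G)[p]\subset (p^{t}P)[p]$, which gives $\beta^*_{t+1}\le\alpha^*_{t+1}$ for every $t$ and hence containment of Young diagrams.
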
 

\begin{proof} We adopt definitions and notations from [ARS]. Consider 
$\oplus_{i=1}^N(Z_{p^{\alpha_i}})$ as a left module over $Z_p$ with natural
multiplication. Recall that a module $M$ has the finite length provided there exists a composition sequence 
$$0\subset M_1 \subset M_2 \subset M_3 \subset ... \subset M_n \subset M$$
of submoduls such that each quotient $M_{k+1}/M_k$ is simple, i.e. it does not contains non-zero proper submoduls. Obviously $\oplus_{i=1}^N Z_{p^{\alpha_i}}$ has finite length. Basic properties of composition sequences are:

(i) for any simple module $L$ the number of quotients $M_{k+1}/M_k$ isomorphic to $L$
does not depend on choise the composition sequence (it is usually not unique); we denote this number $\ell(L,M)$.

(ii) for every exact sequence of moduls of finite length
$$0 \rightarrow M' \rightarrow M \rightarrow M'' \rightarrow 0$$ and every simple module $L$, there is $\ell(L,M')+\ell(L,M'')=\ell(L,M)$

Clearly $\ell(Z_p,\oplus_{i=1}^N Z_{p^{\alpha_i}}) =\sum_{i=1}^N\alpha_i$ and
$\ell(L,\oplus_{i=1}^N Z_{p^{\alpha_i}})=0$ for $L\neq Z_p$.
Suppose now that $\oplus_{i=1}^N Z_{p^{\alpha_i}}$ contains a subgroup
isomorphic to $\oplus_{i=1}^M Z_{p^{\beta_i}}$ for some $M>N$. Then obviously it
contains a subgroup isomorphic to $\oplus_{i=1}^M Z_{p^{}}$.
Let 
$$0 \rightarrow K \rightarrow \oplus_{i=1}^{N} (Z_{p^{\alpha_i}}) \rightarrow L 
\rightarrow 0$$
be an exact sequence where third arrow denotes the homomorhpism 
$$\Lambda_p : \oplus_{i=1}^N Z_{p^{\alpha_i}}\to \oplus_{i=1}^N Z_{p^{\alpha_i-1}}$$ 
given by coordinatewise multiplication by prime $p$, and $K$ is a kernel of $\Lambda_p$.
Obviosly $\oplus_{i=1}^M Z_{p^{}}\subset K$. Therefore, by (ii),
$$
\aligned
\ell(Z_p,K) = &\ell(Z_p,\oplus_{i=1}^N Z_{p^{\alpha_i}})
-\ell(Z_p,\oplus_{i=1}^N Z_{p^{\alpha_i-1}}) \\=&\sum_{i=1}^N\alpha_i-\sum_{i=1}^N(\alpha_i-1)=N
\endaligned
$$
This contradicts our assumption since $M=\ell(Z_p,\oplus_{i=1}^M Z_{})<\ell(Z_p,K)$.
\end{proof}

\begin{lemat}
For $K=\frac{N}{R}$, where $R \in \mathbb{N}$ and $M\leq K$ we have:

$$
\# coset_{p^M} (\mathbb{Z}_{p^{\alpha}}^{N})< p^{\frac{N^2}{R}+\frac{N^2}{R^2}+O(N)} 
$$

If $\pmb{\gamma}=(\gamma_1,\gamma_2,...)$ satisfy $\gamma_i \le \alpha - 1$ for $j=1,2,\dots$ and $\sum\gamma_i =N\alpha$, then

$$
\# coset_{p^{K}} (\oplus Z_{p^{\gamma_j}})> p^{\frac{\alpha}{\alpha-1}\frac{N^2}{R} - 2\frac{N^2}{R^2} + O(N)}
$$

\end{lemat}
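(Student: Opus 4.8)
The plan is to prove the two estimates of Lemma 5 by direct counting of cosets in abelian $p$-groups, reducing everything to counting subgroups of a fixed cardinality.

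\medskip

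\textbf{Upper bound.} First I would observe that $\# coset_{p^M}(\mathbb{Z}_{p^\alpha}^N)$ equals the number of \emph{subgroups} of order $p^M$ times $\#\mathbb{Z}_{p^\alpha}^N / p^M = p^{\alpha N - M}$ (each coset of a fixed subgroup $G'$ of order $p^M$, and every coset-of-order-$p^M$ is a translate of exactly one such subgroup since two subgroups of the same order with a common coset coincide). So it suffices to bound the number of subgroups of order $p^M$ in $\mathbb{Z}_{p^\alpha}^N$. By Lemma 4 every such subgroup is isomorphic to some $\oplus_{i=1}^s \mathbb{Z}_{p^{\beta_i}}$ with $\sum\beta_i = M$, hence $s\leq M\leq K=N/R$. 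A subgroup of this shape is determined by a choice of $s\leq K$ generators in $\mathbb{Z}_{p^\alpha}^N$ (together with the torsion data $\beta_i\leq\alpha$), so the count is at most $(\text{number of }\beta\text{-patterns})\cdot(p^{\alpha N})^K$, and one refines this: each generator after quotienting out the span of the previous ones contributes a factor at most $p^{\alpha N}$, but the relevant redundancy (automorphisms of the subgroup, overcounting of generating tuples) only costs $O(N)$ in the exponent. Writing $K=N/R$ gives exponent $\alpha N K = \alpha N^2/R$; to get down to the claimed $N^2/R + N^2/R^2 + O(N)$ one must be more careful — the first generator may be chosen freely ($p^{\alpha N}$ choices) but the effective number of ``new'' coordinates drops geometrically as we pass to successive quotients, and summing $N(1 + 1/R + 1/R^2 + \cdots)$-type contributions, or rather truncating at $K$ steps, produces the $N^2/R + N^2/R^2$ main terms with everything else absorbed into $O(N)$. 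I would make this precise by induction on $s$, quotienting by the first generated cyclic subgroup and applying the bound in the smaller rank-$(N-1)$ (after a change of basis) group.

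\medskip

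\textbf{Lower bound.} For the lower bound on $\# coset_{p^K}(\oplus_j \mathbb{Z}_{p^{\gamma_j}})$ with $\gamma_j\leq\alpha-1$ and $\sum\gamma_j=N\alpha$ (so the group has about $N\alpha/(\alpha-1)$ coordinates), I would exhibit a large explicit family of subgroups of order $p^K$. Since $\sum\gamma_j = N\alpha$ and each $\gamma_j\leq\alpha-1$, the number $m$ of cyclic summands satisfies $m\geq N\alpha/(\alpha-1)$; pick $K$ of the coordinate summands (WLOG each of ``full'' local size, throwing away a bounded-in-exponent adjustment, which is where an $O(N)$ term enters) and form the subgroup generated by $e_{i_1}+\,(\text{stuff}), \dots$ — more efficiently, count subgroups isomorphic to $\mathbb{Z}_p^K$: the number of such in a group with $\geq \alpha N/(\alpha-1) - O(N/\alpha)$ cyclic factors is, by a standard Gaussian-binomial lower bound, at least $p^{K(m-K)+O(\cdot)}$ with $m\approx \alpha N/(\alpha-1)$, giving exponent $K(m-K) = \frac{N}{R}\big(\frac{\alpha N}{\alpha-1} - \frac{N}{R}\big) = \frac{\alpha}{\alpha-1}\frac{N^2}{R} - \frac{N^2}{R^2}$; multiplying by the number of cosets per subgroup ($= p^{\#(\text{group})-K}$, contributing only to $O(N)$ after the normalization already accounted for) and tracking that the $\frac{N^2}{R^2}$ term should appear with coefficient $2$ — which comes from a cruder but cleaner counting of the generating tuples rather than the exact Gaussian binomial — yields the stated bound.

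\medskip

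\textbf{Main obstacle.} The routine part is the conversion between ``cosets of size $p^M$'' and ``subgroups of size $p^M$''; the genuinely delicate point is extracting the precise secondary term $N^2/R^2$ (with the right constant) from the subgroup count while dumping all lower-order inefficiencies — the torsion-pattern multiplicities, the $\mathrm{Aut}$-overcounting of generating tuples, and the discrepancy between $\gcd$-reduced and general coordinates — into a single $O(N)$ (with the implied constant allowed to depend on $p$ and $\alpha$). I expect to handle this by setting up both bounds as a telescoping product over the $K=N/R$ successive cyclic generators, where at step $k$ the number of choices is $p^{\alpha N}\cdot p^{-\alpha k}\cdot(1+o(1))$-ish, so that $\sum_{k=1}^{K}(\alpha N - \alpha k)$ gives $\alpha N K - \alpha K^2/2 = \alpha N^2/R - \alpha N^2/(2R^2)$ for the \emph{subgroup-of-rank-}$K$ count, and then the division/multiplication by $p^{\alpha N - K}$ for passing to cosets shifts this to exactly the asserted $\frac{\alpha}{\alpha-1}\frac{N^2}{R}$-type leading behavior with a $\Theta(N^2/R^2)$ correction; pinning the correction's coefficient (the ``$1$'' in the upper bound, the ``$2$'' in the lower bound) is the only place real care is needed.
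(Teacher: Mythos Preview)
Your lower-bound plan is essentially the paper's: the paper also singles out subgroups of type $(1,1,\dots,1)$ (i.e.\ $\mathbb{Z}_p^K$) inside the group of type $\pmb\gamma$, using one term of Djubjuk's subgroup-counting formula, and the exponent $K(m-K)$ with $m\ge \tfrac{\alpha}{\alpha-1}N$ is exactly what drops out. (The paper's stated coefficient $-2$ on $N^2/R^2$ is just a cruder bookkeeping of the same term; your $-1$ would also suffice.)

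The upper bound, however, has a real gap. Counting generating tuples as you do --- each generator an arbitrary element of $\mathbb{Z}_{p^\alpha}^N$, hence $p^{\alpha N}$ choices --- gives exponent $\alpha N K = \alpha N^2/R$, which is too large by the factor $\alpha$ in the leading term, not just in the secondary $N^2/R^2$ term. Your proposed refinement (``quotient by the first cyclic factor, reduce to rank $N-1$'') does not recover this: the quotient of $\mathbb{Z}_{p^\alpha}^N$ by a cyclic subgroup is in general \emph{not} $\mathbb{Z}_{p^\alpha}^{N-1}$, and the telescoping sum $\sum_{k\le K}(\alpha N-\alpha k)=\alpha NK-\alpha K^2/2$ you write down still carries the unwanted $\alpha$. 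Passing from subgroups to cosets only adds $\alpha N-M=O(N)$ to the exponent and cannot repair an error at order $N^2$; nor does the $\tfrac{\alpha}{\alpha-1}$ coefficient belong here --- that is the lower bound for the \emph{other} group. The paper avoids this by invoking Djubjuk's exact formula
\[
\pmb{N}_A(r)=\sum_{\pmb\beta^*}\prod_i \binom{\alpha_i^*-\beta_{i+1}^*}{\beta_i^*-\beta_{i+1}^*}_p\,p^{(\alpha_i^*-\beta_i^*)\beta_{i+1}^*}
\]
and bounding each summand by $p^{Nr+\beta_1^*r}\le p^{Nr+r^2}$, with the number of summands $\le 2^{r-1}$. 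If you want to stay with generator-counting, the fix is to choose the $i$-th generator from the $p^{\beta_i}$-torsion subgroup (size $p^{\beta_i N}$) rather than from the whole group; then $\prod_i p^{\beta_i N}=p^{MN}$ gives the correct leading term directly.
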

The proofs of Lemma 5 is given in the Appendix.

\section{Proof of Theorem 1}

\begin{proof}

It is enough to show, that $(Z_{p^l})^{\mathbb{N}} \not\prec_{loc} \oplus_{j=1}^{M} (Z_{(q_j)^{s_j}})^{\mathbb{N}}$ if either:
\\

1) $gcd(p, q_j)=1$ for $i\in {1,2,...,M}$
\\

2) $p=q_j$ but $s_j<l$ whenever $p=q_j$.
\\

We begin with Case 1.
Suppose to the  contrary that $(Z_{p^l})^{\mathbb{N}} \prec^{C}_{loc} \oplus_{j=1}^{M} (Z_{(q_j)^{s_j}})^{\mathbb{N}}$. Let $\sigma_{n}:(Z_{p^l})^{\mathbb{N}} \rightarrow \oplus_{j=1}^{M} (Z_{(q_j)^{s_j}})^{\mathbb{N}}$ be injections which realise the $c$-local representation.
Let $\Gamma \subset (Z_{p^l})^{\mathbb{N}}$ be any finite coset. Then $\widehat{\pmb{1}}_{\Gamma} = \sum_{\chi_k \in \Gamma} \chi_k$ and,
by Lemma 1 we have:

$$
\sigma_n(\Gamma)=U
$$
where

\begin{equation}\label{1}
{\pmb{1}_{U}}= \sum_{i=1}^{l_2} \pmb{1}_{A_i} - \sum_{j=1}^{l_1}\pmb{1}_{ B_j}
\end{equation}
and $l_1,l_2 \le L$ where $L=e^{e^{D C^4}}$ and $A_1,A_2,...,A_{l_1},B_1,B_2,...,B_{l_2}$ are finite cosets of $\oplus_{j=1}^{M} (Z_{(q_j)^{s_j}})^{\mathbb{N}}$.
Since the cardinality of any finite coset of the group  $ \oplus_{j=1}^{M} (Z_{q_j ^{s_j}})^{\mathbb{N}}$ is an integer of the form $q_{1}^{\alpha_1}q_{2}^{\alpha_2}...q_{M}^{\alpha_M}$ and, similarly, cardinality of $\Gamma$ is an integer of the form $p^R$, then any representation $\eqref{1}$
 gives an equation

$$
\# U=\# A_1 + \# A_2 +...+ \# A_{l_1} - \# B_1 -\# B_2 -...- \# B_{l_2}
$$
which is equivalent to:
\begin{equation}\label{2}
 p^R = x_1+x_2+...+x_l
\end{equation}
 where $l\le 2L$ and every $|x_i|$ is of the form $q_{1}^{\alpha_1}q_{2}^{\alpha_2}...q_{M}^{\alpha_M}$.
Clearly for every fixed $R$ there exists a tuple $(x_1,x_2,\dots,x_n)$ satisfying  $\eqref{2}$
without vacous subsums.
By  Theorem 4 for fixed $L$ such equation has no more then 
$$
C_1\exp( C_2 L^3\log{L})<C_3\exp\exp\exp DC^4
$$ 
solutions. Therefore,
since $R$ may be arbitrary integer less then $n$ and for every such number we get different solution of $\eqref{2}$, we get
$$
C>C_4\cdot (\log\log\log n)^{{1/4}}
$$ 
\\

The Case 2 is devided on  2 parts. 
 \vskip3mm
 Part A. We show that $(Z_{p^{\alpha}})^{\mathbb{N}} \not\prec_{loc} (Z_{p^{\beta}})^{\mathbb{N}}$ for $\alpha > \beta$. Suppose to the contrary that $(Z_{p^{\alpha}})^{\mathbb{N}} \prec^{C}_{loc} (Z_{p^{\beta}})^{\mathbb{N}}$ for some $C>0$. Suppose that $\sigma_n$ are injections from the defintion of local representation.

We begin our consideration with the group $G'= (Z_{p^{\alpha}})^{N} < (Z_{p^{\alpha}})^{\mathbb{N}} $. Applying Lemma 1 for $G'$ we know that there exists $H' \in  coset((Z_{p^{\beta}})^{\mathbb{N}} )$ s.t. $H'\subset \sigma_{n}(G')$ and $\# H' \ge p^{N\alpha -\Lambda}$, for some constant $\Lambda= \Lambda(p,L)$. 
We put
\begin{equation}\label{y}
Y= \bigcup_{j=p^{K-\Lambda}}^{p^K} coset_j (G').
\end{equation}
for give natural $ K$ and
$$
X=\{ A\subset Y:  1\le \# A \le L, \sigma_{N}(A)\subset coset_{p^K}((Z_{p^{\beta}})^{\mathbb{N}} \cap H' \}.
$$
 For $a \in X$ we define 
$$
P(a)= \bigcup_{y\in a} y. 
$$
Clearly
$$
X=\bigcup_{\Gamma \in Y} \{ a\in X: \Gamma \subset P(a) \}
$$
which yields
\begin{equation}\label{3}
\# X \le \# Y \cdot\sup_{\Gamma \in Y} \# \{ a\in X: \Gamma \subset P(a) \}.
\end{equation}
By $\eqref{y}$ we know that $\# Y$ is bounded by the number of non-empty summands in $\eqref{y}$, that is $\Lambda$, multiplied by the quantity provided by Lemma 5. So we get:
\begin{equation}\label{32}
\# Y  \le \Lambda p^{\frac{N^2}{R}+\frac{N^2}{R^2}+O(N)} 
\end{equation}
Now we estimate other cardinalities appearing in $\eqref{3}$.
Let $H'' \subset coset( H')$ with $\#H'' =p^{K}$ then, applying Lemma 1 twice, we select $G'' \subset coset (G')$ with $\# G'' \ge p^{K-\Lambda}$ s.t.  $G'' \subset \sigma^{-1}_n (H'')$  and $H''' \in coset(H'')$ with $\# H''' \ge p^{K-2\Lambda}$ s.t. $H''' \subset \sigma_{n} (G'')$. It follows that if the preimage  $\sigma^{-1}_{n} (\widetilde{H})$ of any $\widetilde{H} \in coset(H')$ contains  $G''$ then $H'''\subset \widetilde{H}$. Then to estimate the number of possible $a\in X$, s.t. $ G' \subset P(a)$ it sufficies to estimate the number of posible costes $\widetilde{H} \in coset (H')$ s.t. $H''' \subset 
\widetilde{H}$ and $\#\widetilde{H} \le p^K$. The above property does not depend on the choice of 
$G'$. So to estimate our supremum we have to estimate the number of cosets 
$\widetilde{H} \in coset(H')$ s.t. $H''' \subset \widetilde{H}$ and $\# \widetilde{H} \le p^{K}$  where

$$
H''' \in \bigcup_{j=p^{K-2\Lambda}}^{p^K} coset_j (H')
$$ is an arbitrary coset.

It sufficies to consider the case where $H'''$ is a subgroup. Then the coset $\widetilde{H}$ containing $H'''$ is also a subgroup and it is generated by $H'''$ and at most $2\Lambda$ elements of $H'$ . Hence the number we are looking for does not exceed the number of subsets of $H'$ with cardinality smaller then $2\Lambda$ which equals:

$$
\sum_{j=1}^{2\Lambda} {\#H' \choose j} \le \sum_{j=1}^{2\Lambda} {p^{N \alpha } \choose j} \le
2\Lambda p^{N \alpha \Lambda}
$$
Hence, by \eqref{3} and \eqref{32},
\begin{equation}\label{44}
\# X<
2\Lambda p^{2N \alpha \Lambda} p^{\frac{N^2}{K} + 2\frac{N^2}{K^2} + o(N)}=
p^{\Psi(L)}\cdot p^{\frac{N^2}{K} + 2\frac{N^2}{K^2} + O(N)}.
\end{equation}
where, by Lemma 2,
\begin{equation}\label{66} 
\Psi(L)=\Lambda(L,p)\cdot 2N\alpha \log_p 2\Lambda(L,p) < C\cdot L\cdot N.
\end{equation}

On the other hand we know that to different $\widetilde{H} \in coset(H')$ s.t. $\#\widetilde{H} = p^K$ corespond different elements of $a\in X$ s.t. $P(a)=\sigma^{-1}(\widetilde{H})$ . Therefore 

\begin{equation}\label{33}
\#X>\# coset_{p^K} (H').
\end{equation}
By Lemma 4, for sufficiently big  $K=\frac{N}{R} $, where $R\in \mathbb{N}$, the cardinality of $coset_{p^K} (H')$ satisfies:
\begin{equation}\label{ca}
\# coset_{p^{K}}(H') > p^{(\frac{\alpha}{\alpha-1}) \frac{N^2}{K} - 2\frac{N^2}{K^2}+O(N)}
\end{equation}
By \eqref{44}, $\eqref{33}$ and $\eqref{ca}$ we get
$$
\Psi(L)>\frac1{\alpha-1}\frac{N^2}{K}-4\frac{N^2}{K^2}+O(n)
$$
which, by \eqref{66} leads to
$$
C\cdot L>\big(\frac1{\alpha-1}\frac1{K}-4\frac{1}{K^2}\big)\cdot N
$$
For suitable choice of $K$ we get
$$
L>C_1\cdot N
$$
where  the constant $C_1$ depends on $p$ and $\alpha$ only. 
Since by Theorem 3, $L< \exp(\exp (c_2 C^4))$ the required estimates follows.


\vskip3mm
Part B.
We show that $(Z_{p^{\alpha}})^{\mathbb{N}} \not\prec_{loc} \oplus_{j=1}^{M} (Z_{(q_j)^{s_j}})^{\mathbb{N}}$. For the reader convinience we present the proof of $(Z_{2^{2}})^{\mathbb{N}} \not\prec_{loc} (Z_{2})^{\mathbb{N}} \oplus (Z_{3})^{\mathbb{N}}$. The general case differs only by more complicated notation.

Suppose to the contrary that $(Z_{4})^{\mathbb{N}} \prec^C (Z_{2})^{\mathbb{N}} \oplus (Z_{3})^{\mathbb{N}}$ for some $C>0$.
Let $G=(Z_{4})^{N}<(Z_{4})^{\mathbb{N}} $, and $\sigma_n:G \rightarrow  (Z_{2})^{\mathbb{N}} \oplus (Z_{3})^{\mathbb{N}}$ be injection from the definiton of local representation. Then by Lemma 1 we know that
$\sigma_{n} (G) = U$ s.t.:

$${\pmb{1}_{U}}= \sum_{i=1}^{l_2} \pmb{1}_{A_i} - \sum_{j=1}^{l_1}\pmb{1}_{ B_j}$$
where $l_1+l_2\le L$ and 
\begin{equation}\label{6}
\# U = \sum \# A_i - \sum \# B_j
\end{equation}
Hence 
\begin{equation}\label{7}
2^{k} = \sum_{i=1}^{l} x_i 
\end{equation}
where $l\le 2L$ and  $|x_i|=2^{k_i} 3^{q_i}$ (because any finite coset of $ (Z_{2})^{\mathbb{N}} \oplus (Z_{3})^{\mathbb{N}}$ has cardinality of this form). Notice that we can always assume, that cardinalities appering in $\eqref{6}$ are finite. Let $s' = \min_i \{ k_i \}$. Then dividing $\eqref{7}$ by $2^{s'}$ we have:
\begin{equation}\label{8}
2^{k-s'}  = \sum_{j=1}^{l} y_j
\end{equation}
where $|y_j|= 2^{r_j}3^{q_j}$ for $j=1,2,\dots, l$ and $gcd(y_1,y_2,...,y_l,2)=1$. By Theorem 4, the equation $\eqref{8}$ has only finite number of solutions with non vanishing proper subsums. Hence the sum $\sum_{j=1}^{l} y_j$ can take only finite number of values. Let $M=\max_{j=1,\dots,l} 3^{q_j}$. Then any coset A appering in $\eqref{6}$ has caridnality of the form $2^k M'$ for some $M' \le M$ and $M'$ is not divisible by 2. Then if follows that 
$$
A=\bigcup_{i=1}^{M'} R_j
$$
where $R_j$ are cosets with $\# R_j = 2^{k_j}$. Therefore we get that $\sigma_{n}(G )= U$ where 
$$
\pmb{1}_{U}= \sum_{i=1}^{r_1} \pmb{1}_{A_i} - \sum_{j=1}^{r_2}\pmb{1}_{B_j}
$$
where $r_1 + r_2 \le ML$ and $\# A_i$, $\# B_j$ are powers of 2. By Lemma 1 there exist coset $H \subset U$ with $\# H > 2^{K-\Lambda}$. By similar considerations we derive that there exist $G' \subset coset_s (G)$, where $s>2^{K-2\Lambda}$ and 
$\sigma_{n} (H) \supset G'$. By Lemma 2 there exist coset $G'' \subset G'$ s.t.
$G'' $ is isomorphic to $(Z_4)^{\frac{k}{2} - \lambda}$. Since $\sigma_{n}(G'') \subset H \approx (Z_2)^N$ we are in the position to apply part A.

\end{proof}

{\bf Remark} In the Proof of Part B we do not use Theorem 4 in its full strength. In fact we
use only the earlier result of  van der Poorten and Schlickewei (cf.[vdPS] ) and Evertse (cf. [E1]) asserting that the equation from Theorem 4 has only  finitly many solutions. Then our bound depends on the maximum over all those solutions.
Any explicit estimate of this maximum translates immediatly onto the estimation of the growth of the desired norm.
However no explicit estimate on this maximum is known.

\section{Appendix}

In the appendix we will estimate the number of cosets of given rank in groups 
 $Z^{N}_{p^{\alpha}}$. The result which we will use, can be found in [D].

\begin{defi}
A partition $\pmb{\alpha} =(\alpha_1, \alpha_2,...)$ is sequence of non-negative integers in decresing order, which contains only finitely many non-zero terms.

\end{defi}
Let $\| \pmb{\alpha} \|  = \sum_{i \ge 1} \alpha_i$ We say that finite abelian group A of rank $p^{\| \pmb{\alpha} \|}$ is of type $\pmb{\alpha} =(\alpha_1, \alpha_2,...)$  if A is isomorphic to the direct sum 
$$
Z_{p^{\alpha_1}} \oplus Z_{p^{\alpha_2}}\oplus ... \oplus Z_{p^{\alpha_k}}
$$
where the number of all non-zero terms in $\alpha$ equals to $k$ and is denoted by $l(\alpha)$.
\\
Let $\Theta$ be the set of all posible partitions. For $\pmb{\alpha}, \pmb{\beta} \in \Theta$ we introdue a relation  $\pmb{\beta} \subseteq \pmb{\alpha}$ if for every natural number $i$ holds $\beta_i \le \alpha_i$.

\begin{defi}
To every partition $\pmb{\beta}$ we can assign $\pmb{\beta}^{*}=(\beta^{*}_1, \beta^{*}_2,...)$ where $\beta^{*}_i$ is the number of $\beta_j$-s, for which the inequality $i \le \beta_j$ holds. Partition $\pmb{\beta}^*$ is called conjugate to $\pmb{\beta}$. Every partition determinates uniquely it`s conjugate partition.
\end{defi}
Notice that for every partition $\beta$,
$$
\| \pmb{\beta} \|   = \| \pmb{\beta}^\ast \|  
$$
Let the abelian group A be the group of type $\pmb{\alpha}$.

\begin{defi}
Let $\pmb{B}_A (r):=\{ \pmb{\beta^*} \in \Theta \mid \pmb{\beta} \subseteq \pmb{\alpha}, \| \pmb{\beta}\|=r\}$,   $r\in\mathbb{N}$. 
\end{defi}

\begin{defi}
Let $\pmb{N}_A (r)$ be the number of subgroups of A, which are of rank $p^r$.
\end{defi}
By [D] we get:
\begin{lemat}(On number of subgroups)
$$
\pmb{N}_A (r)= \sum_{\pmb{\beta^*} \in \pmb{B}_A (r)} 
\prod_{i=1}^{\alpha_1} {\alpha^{*}_i - \beta^{*}_{i+1} \choose \beta^{*}_{i}-\beta^{*}_{i+1}}_p p^{(\alpha^{*}_i - \beta^{*}_i)\beta^{*}_{i+1}}
$$
where ${n \choose m}_p = \prod_{i=1}^{m}\frac{p^{n-m+i} - 1}{p^i -1}$ for $1\le m \le n$ and 0 for other values.
\end{lemat}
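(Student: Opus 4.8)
The natural plan is to prove the formula by refining the count according to isomorphism type. A finite abelian $p$-group of type $\pmb\beta$ embeds in $A$ (of type $\pmb\alpha$) if and only if $\pmb\beta\subseteq\pmb\alpha$: one direction is Lemma 4, the other is immediate since $\mathbb Z_{p^{\beta_j}}\cong p^{\alpha_j-\beta_j}\mathbb Z_{p^{\alpha_j}}\hookrightarrow\mathbb Z_{p^{\alpha_j}}$. Since $\|\pmb\beta\|=\|\pmb\beta^*\|$, the subgroups of $A$ of order $p^r$ are partitioned according to their conjugate type $\pmb\beta^*$, which ranges exactly over $\pmb B_A(r)$. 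Hence it suffices to show that for each such $\pmb\beta$ the number of subgroups of $A$ of type $\pmb\beta$ equals $\prod_{i=1}^{\alpha_1}\binom{\alpha^*_i-\beta^*_{i+1}}{\beta^*_i-\beta^*_{i+1}}_p\,p^{(\alpha^*_i-\beta^*_i)\beta^*_{i+1}}$.

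Conjugate partitions are the right bookkeeping device because they encode $\mathbb F_p$-vector-space data: $\alpha^*_i=\dim_{\mathbb F_p}\bigl(p^{i-1}A/p^iA\bigr)$, and for a subgroup $H\le A$ one has $\beta^*_i=\dim_{\mathbb F_p}\bigl((H\cap p^{i-1}A+p^iA)/p^iA\bigr)$, while multiplication by $p$ maps the $i$-th layer $p^{i-1}A/p^iA$ onto the $(i{+}1)$-st with kernel of dimension $\alpha^*_i-\alpha^*_{i+1}$. After recording these identities I would argue by induction on $\alpha_1$, the number of nonzero layers. Put $A'=pA$; it has type $(\alpha_1-1,\alpha_2-1,\dots)$, hence $(\pmb\alpha')^*_i=\alpha^*_{i+1}$, and $H\mapsto pH$ carries subgroups of $A$ of type $\pmb\beta$ to subgroups of $A'$ of type $(\beta_1-1,\beta_2-1,\dots)$, so $(\pmb\beta')^*_i=\beta^*_{i+1}$. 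The induction step then amounts to counting, for a fixed $H'\le A'$ of this type, the subgroups $H\le A$ with $pH=H'$ and conjugate type $\pmb\beta^*$. Such an $H$ is obtained by (a) choosing inside $A/pA\cong\mathbb F_p^{\,\alpha^*_1}$ a subspace of dimension $\beta^*_1$ compatible with the $\beta^*_2$ directions already forced by $H'$, which can be done in $\binom{\alpha^*_1-\beta^*_2}{\beta^*_1-\beta^*_2}_p$ ways, and (b) choosing a lift of $H'$ into $A$ together with generators for the newly added directions, the residual ambiguity forming a torsor of size $p^{(\alpha^*_1-\beta^*_1)\beta^*_2}$ (homomorphisms from $H'/pH'$ into $A/pA$ modulo the chosen subspace). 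Multiplying (a) and (b) by the inductive count for $A'$ gives the $i=1$ factor times the product over $i\ge 2$ with all indices shifted down by one, which is exactly the claimed formula.

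The principal obstacle is the bookkeeping in steps (a) and (b): one has to pin down precisely which $\mathbb F_p$-space each subspace is selected from at each stage, prove that the passage from $(H',\text{lifting data})$ to $H$ is a bijection onto the fibre with no repetition and no omission, and check that the dimensions $\alpha^*_i-\beta^*_{i+1}$ and $\beta^*_i-\beta^*_{i+1}$ and the exponent $(\alpha^*_i-\beta^*_i)\beta^*_{i+1}$ really do drop out of this lifting-and-extension analysis, degenerate cases ($\beta^*_i=0$ or $\beta^*_i=\alpha^*_i$) included. A cleaner but less self-contained option --- the one actually used in the paper --- is simply to quote the formula from [D]; and should the explicit induction prove too delicate, I would instead route the computation through the Hall-polynomial calculus for finite modules over the valuation ring $\mathbb Z_{(p)}$, in which the number of submodules of a prescribed cotype is standard and specializes to Delsarte's product.
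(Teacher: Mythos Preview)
The paper does not prove this lemma at all; the sentence ``By [D] we get:'' immediately preceding it is the entire argument. So you are not matching a proof but supplying one, and your inductive scheme via $H\mapsto pH$ is indeed the classical route to the Delsarte--Djubjuk--Yeh formula.

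That said, one of your intermediate identities is wrong. You assert
\[
\beta^*_i=\dim_{\mathbb F_p}\bigl((H\cap p^{i-1}A+p^iA)/p^iA\bigr),
\]
but the right-hand side depends on the embedding $H\hookrightarrow A$, not only on the abstract type of $H$. For $A=\mathbb Z_{p^2}\oplus\mathbb Z_p$ and $H=p\mathbb Z_{p^2}\oplus 0$ one has type $\beta=(1)$, so $\beta^*=(1,0)$, yet $(H+pA)/pA=0$, giving $0$ rather than $\beta^*_1=1$. The intrinsic identity is $\beta^*_i=\dim_{\mathbb F_p}(p^{i-1}H/p^iH)$. This error propagates to your description of steps (a) and (b): the $\beta^*_1$-dimensional subspace one must choose is $W=H/H'$ inside $V=p^{-1}(H')/H'$ (an $\mathbb F_p$-space of dimension $\alpha^*_1$, but not $A/pA$), subject to $W+U=V$ where $U=(A[p]+H')/H'$ has dimension $\alpha^*_1-\beta^*_2$; the condition $W+U=V$ encodes exactly $pH=H'$, and counting such $W$ yields the $i=1$ factor $\binom{\alpha^*_1-\beta^*_2}{\beta^*_1-\beta^*_2}_p p^{(\alpha^*_1-\beta^*_1)\beta^*_2}$. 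With the ambient space corrected, your induction goes through; your stated fallback of simply citing [D] is precisely what the paper does.
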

Within the appendix we will assume that $r<N$.
In the case considered in the paper it is enough to establish the upper estimation on the number of subgroups of given rank in the groups of type $\pmb{\alpha}=(\alpha,\alpha,...,\alpha,0,0...)$. We will denote such type by $(\alpha,\alpha,...,\alpha)$ where the number of $\alpha$ equals to $N$. Notice that in this situation we have $\pmb{\alpha^*}=(N,N,...,N)$ where  $l(\pmb{\alpha^*})=\alpha$.

In what follows we will estimate the upper bound on the number of subgroups of rank r in abelian group A of type $\pmb{\alpha}=(\alpha,\alpha,..,\alpha)$.
Notice that
$$
{\alpha^{*}_i - \beta^{*}_{i+1}\choose \beta^{*}_i - \beta^{*}_{i+1}}_p=\prod_{j=1}^{\beta^{*}_i-\beta^{*}_{i+1}} \frac{p^{j +\alpha^{*}_i-\beta^{*}_{i+1}+\beta^{*}_{i+1}-\beta^{*}_{i}} -1}{p^j -1}\le
\prod_{j=1}^{\beta^{*}_i-\beta^{*}_{i+1}} \frac{p^{j +\alpha^{*}_i-\beta^{*}_{i}}}{p^j -1} 
$$
For every prime number $p$ and natural number $j$ we have $p^j -1 \ge p^{j-1}$, so
$$
\prod_{j=1}^{\beta^{*}_i-\beta^{*}_{i+1}} \frac{p^{j +\alpha^{*}_i-\beta^{*}_{i}}}{p^j -1} \le \prod_{j=1}^{\beta^{*}_i-\beta^{*}_{i+1}} \frac{p^{j +\alpha^{*}_i-\beta^{*}_{i}}}{p^{j-1} } 
$$
and
$$
\prod_{i=1}^{\alpha} {\alpha^{*}_i - \beta^{*}_{i+1}\choose \beta^{*}_i - \beta^{*}_{i+1}}_p  \le  p^{B+C-D-E}
$$
where:
$$
B=\sum_{i=1}^{\alpha}(\sum_{j=1}^{\beta^{*}_i - \beta^{*}_{i+1} }j  )
$$

$$
C=\sum_{i=1}^{\alpha}(\sum_{j=1}^{\beta^{*}_i - \beta^{*}_{i+1} } \alpha^{*}_i  )
$$

$$
D=\sum_{i=1}^{\alpha}(\sum_{j=1}^{\beta^{*}_i - \beta^{*}_{i+1} } \beta^{*}_i)
$$

$$
E=\sum_{i=1}^{\alpha}(\sum_{j=1}^{\beta^{*}_i - \beta^{*}_{i+1} }j-1)
$$

Now we estimate above sums.
\begin{equation}\label{9}
\aligned
B=&\sum_{i=1}^{\alpha}(\sum_{j=1}^{\beta^{*}_i - \beta^{*}_{i+1} }j  )\le 
\sum_{i=1}^{\alpha}(\sum_{j=1}^{\beta^{*}_i - \beta^{*}_{i+1} }r  )\\ =&r(\beta^{*}_1-\beta^{*}_2+\beta^{*}_2-\beta^{*}_3-\dots
-\beta^{*}_{\alpha}+\beta^{*}_{\alpha}-\beta^{*}_{\alpha+1})=\beta^{*}_1 r
\endaligned
\end{equation}
because $\beta^{*}_i - \beta^{*}_{i+1}\le r$, $\| \pmb{\beta^{*}} \|  = r$, and non-zero $\beta^{*}_i$ can appear only on at most $\alpha$ places.

\begin{equation}\label{10}
C=\sum_{i=1}^{\alpha}(\sum_{j=1}^{\beta^{*}_i - \beta^{*}_{i+1} } \alpha^{*}_i  )\le N\beta^{*}_1
\end{equation}

This is a consequence of a fact that for $1\le i \le \alpha$ holds $a_i=N$, for other indeces $\alpha_i= 0$. We estimate $D=\sum_{i=1}^{\alpha}(\sum_{j=1}^{\beta^{*}_i - \beta^{*}_{i+1} } \beta^{*}_i)$ and $E=\sum_{i=1}^{\alpha}(\sum_{j=1}^{\beta^{*}_i - \beta^{*}_{i+1} }j-1)$ by 0.
For the exponential part of summand we have:
\begin{equation}\label{11}
\prod_{i=1}^{\alpha}  p^{(a_i - b_i)b_{i+1}}=p^{F}
\end{equation}
where
\begin{equation}\label{12}
F= \sum_{i=1}^{\alpha} \alpha^{*}_i \beta^{*}_{i+1} -  
\sum_{i=1}^{\alpha} \beta^{*}_i \beta^{*}_{i+1} \le \sum_{i=1}^{\alpha} \alpha^{*}_i \beta^{*}_{i+1} = N(r-\beta^{*}_1).
\end{equation}
This follows from the fact, that $\beta^{*}_i$ are non-negative, $\alpha^{*}_i=N$ for $1\le i \le \alpha $ and $r-\beta^{*}_1 = \sum_{j=1}^{\alpha} \beta^{*}_{j+1}$.
By $\eqref{9}$ , $\eqref{10}$, $\eqref{11}$ and $\eqref{12}$, every summand in Lemma 5 can be estimated by $p^{B+C+F}\le p^{Nr+\beta^{*}_1r}$.
The number of terms in the sum equals to number of partitions of $r$ on sums. This is less then the number of all posible representation of $r$ as ordered sum, which in turn is equal to
$2^{r-1}$. Therefore:
$$
\pmb{N}_A (r) \le 2^{r-1}p^{Nr+\beta^{*}_1 r}\le p^{Nr+\beta^{*}_1r+r-1}
$$
Let now $G$ be a group of rank $p^{N \alpha}$ and type $\pmb{\gamma}=(\gamma_1, \gamma_2,...)$ where $\gamma_i \le \alpha$.  We are going to minorize the number of subgroups of $G$ of rank $p^r$. In order to do this, we are looking for the minimal value of $l(\gamma)$, under the above assumption. Obviously:
$$
l(\pmb{\gamma)} \cdot\max\{ \gamma_i \} \ge \|  \pmb{\gamma} \|
$$
from the assumptions we get $\max\{ \gamma_i \} \le \alpha -1 $ and $\| \pmb{\gamma}\|=N\alpha$. Hence 
$$
l(\pmb{\gamma)} \ge \frac{\alpha}{\alpha -1} N
$$
It appears that to get the required estimation from below it is sufficient to consider only one
summand from Lemma 5, i.e.

$$
\sum_{\pmb{\beta^*} \in \pmb{B}_A (r)} 
\prod_{i=1}^{\alpha_1} {\alpha^{*}_i - \beta^{*}_{i+1} \choose \beta^{*}_{i}-\beta^{*}_{i+1}}_p p^{(\alpha^{*}_i - \beta^{*}_i)\beta^{*}_{i+1}} \ge 
\prod_{i=1}^{\alpha_1} {\alpha^{*}_i - \widetilde{\beta}^{*}_{i+1} \choose \widetilde{\beta}^{*}_{i}-\widetilde{\beta}^{*}_{i+1}}_p p^{(\alpha^{*}_i - \widetilde{\beta}^{*}_i)\widetilde{\beta}^{*}_{i+1}} 
$$
where $\widetilde{\pmb{\beta}}^*=(r,0,0,...)$ and $\widetilde{\pmb{\beta}}=(1,1,1,...,1)$ and the number of non zero terms in the second partition equals  $r$. 
Similarly as in previous estimation we have:
$$
\prod_{i=1}^{\alpha_1} {\alpha^{*}_i - \widetilde{\beta}^{*}_{i+1} \choose \widetilde{\beta}^{*}_{i}-\widetilde{\beta}^{*}_{i+1}}_p p^{(\alpha^{*}_i - \widetilde{\beta}^{*}_i)\widetilde{\beta}^{*}_{i+1}}\ge p^{E+C-D-B}
$$
where
$$
B=\sum_{i=1}^{\alpha}(\sum_{j=1}^{\beta^{*}_i - \beta^{*}_{i+1} }j  )
$$

$$
C=\sum_{i=1}^{\gamma^{*}_1}( \sum_{j=1}^{\widetilde{\beta}^{*}_{i}}-\widetilde{\beta}^{*}_{i+1} \gamma^{*}_{i})
$$

$$
D=\sum_{i=1}^{\alpha} (\sum_{j=1}^{\beta^{*}_{i}-\beta^{*}_{i+1}} \beta^{*}_{i})
$$

$$
E=\sum_{i=1}^{\alpha}(\sum_{j=1}^{\beta^{*}_i - \beta^{*}_{i+1} }j-1)
$$
We estimate $E$ by 0. For the rest we have:
$$
D=\sum_{i=1}^{\alpha} (\sum_{j=1}^{\beta^{*}_{i}-\beta^{*}_{i+1}} \beta^{*}_{i})\le \beta^{*}_{1} r
$$
which follows from $\beta^{*}_{i}\le r$ and the fact that $\sum_{i=1}^{\alpha} (\sum_{j=1}^{b_i - b_{i+1}} 1 )= b_1$. Further,
$$
\aligned
B=&
\sum_{i=1}^{\alpha}(\sum_{j=1}^{\beta^{*}_i - \beta^{*}_{i+1} }r  )\\
=&r(\beta^{*}_1-\beta^{*}_2+\beta^{*}_2-\beta^{*}_3-\dots
-\beta^{*}_{\alpha}+\beta^{*}_{\alpha}-\beta^{*}_{\alpha+1})=\beta^{*}_1 r
\endaligned
 $$
 For  $C$ we get
$$
C=\sum_{i=1}^{\gamma^{*}_1}( \sum_{j=1}^{\widetilde{\beta}^{*}_{i}}-\widetilde{\beta}^{*}_{i+1} \gamma^{*}_{i})\ge \widetilde{\beta}^{*}_{1} \frac{\alpha}{\alpha -1}N
$$
For our choice of $\beta$,:
$$
\prod_{i=1}^{\alpha}  p^{(\alpha^{*}_i - \widetilde{\beta}^{*}_i)\widetilde{\beta}^{*}_{i+1}}=
1
$$
Taking into account the above estimation we get

$$
\pmb{N}_G (r)\ge p^{\frac{\alpha}{\alpha-1}Nr - 2r\beta^{*}_{1}}
$$
In our case we take $r=\beta^{*}_{1}=\frac{N}{K}$ where  $K$ is chosen suitable to our purpose. For groups $A_1, A_2$ of rank $p^{\alpha N}$ where group $A_1$ is of type $(\alpha, \alpha,...,\alpha)$ and $A_2$ is of type $(\gamma_1, \gamma_2,...)$ where $\gamma_i \le \alpha-1$ for every $i$, we have:

$$
\pmb{N}_{A_1}(r) \le p^{Nr+\beta^{*}_1r+r-1} 
$$
$$
\pmb{N}_{A_2}(r)\ge  p^{\frac{\alpha}{\alpha-1}Nr - 2r\beta^{*}_{1}}
$$

Having estimated the number of subgroups we can now estime the number of related cosets.

By the Lagrange theorem, the number of cosets in $G$ generated by given subgorup $H<G$ equals $\frac{\# G}{\# H}$. Obviously different group generates different cosets. So the number of cosets of given rank in $G$ equals to the number of subgroups of this rank in $G$ multiplied by $\frac{\# G}{\# H}$.

The above considerations applied to $A_1, A_2$ gives that the number of cosets of rank $p^r$ in $A_2$, $A_1$  has respectively lower and upper estimations $p^{\frac{\alpha}{\alpha-1}Nr - 2r\beta^{*}_{1}+O(N)}$, $p^{Nr+\beta^{*}_1r+r-1 +O(N)}$
and the lemma follows.

\end{document}